%-----------------------------------------------------------------------
% File sample.tex - a template for contributions to Nankai Proceedings
%-----------------------------------------------------------------------

\documentclass[reqno]{amsproc}

% Inclusion of graphics package
% to use pictures in eps format
\usepackage{graphicx}

%Inclusion of package for clickable links
%\usepackage{hyperref}
\usepackage[dvipdfmx]{hyperref}

% colors for hyperref links
\hypersetup{
colorlinks=true,
linkcolor=blue,
anchorcolor=blue,
citecolor=blue
}

%Other useful packages
\usepackage{amssymb}
\usepackage{amsfonts}
\usepackage{amsmath}
    
% Statements
\newtheorem{theorem}{Theorem}[section]
\newtheorem{lemma}[theorem]{Lemma}

% Environments
\theoremstyle{definition}
\newtheorem{definition}[theorem]{Definition}

\newtheorem{remark}[theorem]{Remark}

\numberwithin{equation}{section}

%Place here your own macros
%Please, keep the usage of your own macros to an absolute minimum
%as all contributions will be compiled together

%    Absolute value notation

\begin{document}

\title{Geometric analysis on manifolds with ends}
\author{Alexander Grigor'yan}
\address{Department of Mathematics University of Bielefeld 33501,  Bielefeld, Germany}
\thanks{Partially supported by SFB 1283 of the German Research Council} 
\email{grigor@math.uni-bielefeld.de}

\author{Satoshi Ishiwata}
\address{Department of Mathematical Sciences Yamagata University, Yamagata 990-8560, Japan}
\thanks{Partially supported by JSPS, KAKENHI 17K05215} 
\email{ishiwata@sci.kj.yamagata-u.ac.jp}

\author{Laurent Saloff-Coste}
\address{Department of Mathematics Cornell University, Ithaca, NY, 14853-4201, USA}
\thanks{Partially supported by NSF grant DMS--1707589}
\email{lsc@math.cornell.edu}

\subjclass[2010]{Primary 58-02, Secondary 35K08, 58J65, 58J35}
\keywords{manifold with ends, heat kernel, Poincar\'e constant}

\maketitle

\section*{Contents}
1. Introduction

2. The state of the art

3. Manifold with ends with oscillating volume functions

References

\section{Introduction}
\label{introduction}
In this survey article, we discuss some recent progress on geometric analysis on manifold with ends.
In the final section, we construct manifolds with ends with oscillating volume functions which may turn out to 
have a different heat kernel estimates from those provided by known results.

Throughout the history of geometric analysis, manifolds with ends  have appeared in several contexts. 
For example, 
Cai \cite{Cai}, Kasue \cite{Kasue} and  Li-Tam \cite{Li-Tam} et. al.  studied manifolds with 
non-negative Ricci (sectional) curvature outside a compact set where manifolds with ends play an important role. 
It should be pointed out that there are other recent  works on manifolds with ends.  See, for instance, 
Carron \cite{Carron}, Doan \cite{Doan}, Duong, Li and  Sikora \cite{Duong}, Hassel, Nix and Sikora \cite{Riesz1},  Hassel and Sikora \cite{Riesz2}.

Because of the bottleneck structure inherent to most manifolds with ends, geometric and analytic properties of manifolds with ends are very different 
 from a manifold such as  $\mathbb{R}^n$. 
For example, in 1979, Kuz'menko and Molchanov \cite{Kuz'menko-Molchanov} proved the following:
\begin{theorem}
On $M=\mathbb{R}^3 \# \mathbb{R}^3$, the connected sum of two copies of $\mathbb{R}^3$, 
the weak Liouville property does not hold. Namely, 
there exists a non-trivial bounded harmonic function.
\end{theorem}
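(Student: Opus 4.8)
The plan is to construct by hand a bounded harmonic function on $M$ that separates the two ends, exploiting the transience of $\mathbb{R}^3$; the weak Liouville property then fails because this function is non-constant.

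First, fix the geometry. Realize $M=\mathbb{R}^3\#\mathbb{R}^3$ as a complete Riemannian manifold which, outside a compact set $K$, is a disjoint union of two ends $E_1\sqcup E_2$, each isometric to the exterior region $\{x\in\mathbb{R}^3:|x|>1\}$ with the Euclidean metric; on $E_i$ write $r=r(x)=|x|\in(1,\infty)$ for the corresponding radial coordinate. For $R>1$ set
\[
\Omega_R \;=\; K\;\cup\;\{x\in E_1:\ r(x)<R\}\;\cup\;\{x\in E_2:\ r(x)<R\},
\]
a relatively compact exhaustion of $M$ whose boundary consists of the two round spheres $\{r=R\}\subset E_1$ and $\{r=R\}\subset E_2$. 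Let $u_R$ solve the Dirichlet problem $\Delta u_R=0$ in $\Omega_R$ with $u_R\equiv 1$ on $\{r=R\}\cap E_1$ and $u_R\equiv 0$ on $\{r=R\}\cap E_2$; the maximum principle gives $0\le u_R\le 1$. On any fixed relatively compact open subset of $M$ the functions $u_R$ (for $R$ large) are all defined and bounded in $[0,1]$, hence, by interior elliptic estimates and a standard compactness argument, a subsequence converges locally uniformly on $M$ to a harmonic function $u$ with $0\le u\le 1$. Thus $u$ is a bounded harmonic function, and it remains only to show it is non-constant.

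This last step is where the transience of $\mathbb{R}^3$ enters. In the annulus $E_1\cap\{1<r<R\}$ compare $u_R$ with the radial harmonic function $\varphi_R(x)=\frac{R}{R-1}\bigl(1-\frac1r\bigr)$, which equals $0$ on $\{r=1\}$ and $1$ on $\{r=R\}$: on $\{r=1\}=\partial K\cap\overline{E_1}$ one has $u_R\ge 0=\varphi_R$ and on $\{r=R\}$ one has $u_R=1=\varphi_R$, so the maximum principle yields $u_R\ge\varphi_R\ge 1-\frac1r$ throughout $E_1\cap\{1<r<R\}$; letting $R\to\infty$ gives $u\ge 1-\frac1r$ on $E_1$, in particular $u\ge\frac34$ at every point of $E_1$ with $r=4$. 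Symmetrically, comparing $u_R$ on $E_2\cap\{1<r<R\}$ with $\psi_R(x)=\frac{1}{R-1}\bigl(\frac{R}{r}-1\bigr)$ (which is $1$ on $\{r=1\}$ and $0$ on $\{r=R\}$) gives $u_R\le\psi_R\le\frac{R}{(R-1)r}$, hence $u\le\frac1r$ on $E_2$ and $u\le\frac14$ at every point of $E_2$ with $r=4$. Therefore $u$ takes values $\ge\frac34$ and values $\le\frac14$, so it is a bounded, non-constant harmonic function on $M$, which is exactly the assertion. (Probabilistically, $u(x)$ is the probability that Brownian motion started at $x$ eventually escapes to infinity along $E_1$, and the barrier $1-\frac1r$ reflects that from deep inside a transient $\mathbb{R}^3$-type end it is unlikely ever to return to the neck.)

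The genuinely substantial step — and the one I would single out as the main obstacle — is this non-constancy, i.e. producing the explicit sub/supersolutions $1-\frac1r$ and $\frac1r$ on the ends; everything else (solvability on $\Omega_R$, the $L^\infty$ bound, equicontinuity, extraction of a convergent subsequence, harmonicity of the limit) is routine elliptic theory. Dimension enters only through the transience of $\mathbb{R}^3$: the same construction works verbatim for $\mathbb{R}^{n_1}\#\mathbb{R}^{n_2}$ whenever $n_1,n_2\ge 3$, while for $\mathbb{R}^2\#\mathbb{R}^2$ it collapses, since the analogous radial barrier involves $\log r$, which is unbounded — consistent with the weak Liouville property holding in that case.
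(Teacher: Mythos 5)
Your construction is correct, and it is the classical argument: solve the Dirichlet problem on an exhaustion with boundary values $1$ on the sphere in one end and $0$ on the sphere in the other, pass to a locally uniform limit, and use the explicit radial barriers $1-\tfrac1r$ and $\tfrac1r$ (available precisely because $\mathbb{R}^3$ is transient) to pin the limit above $\tfrac34$ on $E_1$ and below $\tfrac14$ on $E_2$. The paper itself gives no proof of this statement --- it is quoted from Kuz'menko--Molchanov --- so there is nothing to diverge from; your argument is the standard harmonic-measure-of-an-end construction that underlies that reference, with the comparison-principle step checked correctly in both ends ($\varphi_R\ge 1-\tfrac1r$ and $\psi_R\le\tfrac{R}{(R-1)r}$ both hold, and the bounds survive the limit along the subsequence). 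The only cosmetic caveat is that you normalize the ends to be exact Euclidean exteriors of the unit ball; for a general connected sum in the sense of the paper the ends are isometric to $\mathbb{R}^3\setminus K_i$ for arbitrary compact $K_i$, but the same barriers run on an annulus $\{\rho<r<R\}$ with $K_i\subset B(0,\rho)$, so this is a harmless normalization rather than a gap.
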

It is a well-known fact that the \textit{parabolic Harnack inequality} ((PHI) in short) implies the weak Liouville property.
See  \cite[Section 2.1]{G-SC stability} and  \cite[5.4.5]{SC LNS} for details. 
By a contraposition argument, the above theorem implies that (PHI) does not hold on 
$\mathbb{R}^3\# \mathbb{R}^3$. 

% In our research area, estimate of the heat kernel is a main interest.
\hspace*{-3mm} Denote by $ p \!\left( t,x,y\right) $ the heat kernel of a non-compact weighted manifold $\! (M, d, \mu)$, that is, the
minimal positive fundamental solution of the heat equation $\partial
_{t}u=\Delta u$, where $\Delta$ is the weighed Laplacian. 
In 1986, Li and Yau proved in \cite{Li-Yau} that  
\begin{equation*}
(LY) \qquad \qquad  p(t,x,y) \asymp \frac{c}{V(x, \sqrt{t})} e^{-bd^2(x,y)/t}
\end{equation*}
 holds on non-compact manifold with non-negative Ricci curvature.
Here $V(x, r) := \mu(B(x,r))$, the measure of the open geodesic ball $B(x,r)=\{ y \in M ~:~ d(x,y)<r\}$ and 
the sign $\asymp$ means that both $\leq $ and $\geq$ hold but with different values of the positive 
constants $C$ and $b$. We call this estimate a Li-Yau type bound and write (LY) in short.
 The following theorem is a combined result of \cite%
{Grigoryan 1991}, \cite{SC 1992} based on previous contributions of Moser 
\cite{Moser}, Kusuoka--Stroock \cite{KS} et al.

\begin{theorem}
\label{TVD+PI}On a geodesically complete, non-compact weighted manifold $M$,
the following conditions are equivalent:

\noindent (1) The Li-Yau type heat kernel estimates (LY).

\noindent (2) The parabolic Harnack inequality (PHI).

\noindent (3) The \textit{Poincar\'e inequality} $(PI)$: there exists $C, \kappa >0$ such that for any $x \in M$, $r>0$ and 
$f \in C^\infty \left( \overline{B(x,r)} \right)$,
\begin{equation*}
(PI) ~~~\int_{B(x, r)} |f-f_{B(x,r)} |^2 d\mu \leq C \int_{B(x,  r)} | \nabla f |^2 d\mu,
\end{equation*}
where $f_{B(x,r)}=\frac{1}{V(x,r)} \int_{B(x,r)} f d\mu$,
 and the \textit{volume doubling property} $(VD)$: there exists $C>0$ such that for any $x\in M$, $r>0$,
\begin{equation*} 
V(x, 2r) \leq CV(x,r).
\end{equation*}
\end{theorem}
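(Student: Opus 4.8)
The plan is to prove the cyclic chain of implications $(3)\Rightarrow(2)\Rightarrow(1)\Rightarrow(3)$. The implication $(3)\Rightarrow(2)$ is the analytic core of the theorem and rests on a Moser-type iteration; the other two are more routine, though still delicate, consequences of heat-kernel technology, and along the way one also observes that $(VD)$ is forced by each of the three conditions individually.

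For $(3)\Rightarrow(2)$: the first step is to upgrade the pair $(VD)+(PI)$ to a scale-invariant family of Sobolev inequalities on balls, i.e.\ for some $\nu>2$, every ball $B=B(x,r)$, and every $f$ supported in $B$,
\[
\left(\int_B |f|^{2\nu/(\nu-2)}\,d\mu\right)^{(\nu-2)/\nu}\le \frac{C r^2}{V(x,r)^{2/\nu}}\int_B\left(|\nabla f|^2+r^{-2}f^2\right)d\mu .
\]
This is obtained by iterating $(PI)$ over telescoping families of balls to control $f-f_B$, summing the resulting geometric series by $(VD)$, and feeding the ensuing $L^2$--$L^2$ pseudo-Poincar\'e inequality into the classical Sobolev-from-Poincar\'e argument. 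With this Sobolev inequality in hand one runs Moser's iteration inside parabolic cylinders $Q=(s-r^2,s)\times B(x,r)$: first a mean value inequality for nonnegative subsolutions of the heat equation, $\sup_{Q'}u\le \frac{C}{r^2 V(x,r)}\iint_Q u\,d\mu\,dt$ on a smaller cylinder $Q'$, and then the analogous lower bound for positive supersolutions. The crucial and hardest step is to splice these one-sided estimates together: following Moser, one shows that $\log u$ belongs to a John--Nirenberg / BMO-type class on $B(x,r)$ --- and it is precisely here that $(PI)$ re-enters, to control the oscillation of $\log u$ --- and then applies the abstract lemma of Bombieri--Giusti to deduce the full parabolic Harnack inequality $(PHI)$ with scale-invariant constants.

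For $(2)\Rightarrow(1)$: apply the mean value inequality contained in $(PHI)$ to the caloric function $u=p(\cdot,x,\cdot)$ and use $\int_M p(t,x,\cdot)\,d\mu\le1$ to get the on-diagonal upper bound $p(t,x,x)\le C/V(x,\sqrt t)$; for the matching lower bound combine the semigroup identity $p(2t,x,x)=\int_M p(t,x,y)^2\,d\mu(y)$ with Cauchy--Schwarz and a non-escape estimate ensuring $\int_{B(x,K\sqrt t)}p(t,x,\cdot)\,d\mu\gtrsim1$ for $K$ large (here $(VD)$, which follows from $(PHI)$, and Grigor'yan's volume criterion for stochastic completeness enter). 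The Gaussian factor in the upper bound then comes from Davies' exponential-weighting / integrated maximum principle, and the Gaussian lower bound from iterating the Harnack inequality of $(PHI)$ along a chain of $\lesssim d^2(x,y)/t$ balls of radius $\sqrt t$ joining $x$ to $y$; together these yield $(LY)$.

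Finally, $(1)\Rightarrow(3)$ closes the cycle: integrating the Gaussian lower bound over $B(x,2r)$ with $t=r^2$ and using $\int_M p(t,x,\cdot)\,d\mu\le1$ gives $V(x,2r)\le C V(x,r)$, i.e.\ $(VD)$; and $(PI)$ follows from the heat-kernel lower bound --- which makes $P_{r^2}f(x)$ comparable to a weighted average of $f$ over $B(x,r)$, hence to $f_B$ --- combined with the elementary bound $\|f-P_{r^2}f\|_{L^2}^2\lesssim r^2\|\nabla f\|_{L^2}^2$, yielding a weak Poincar\'e inequality on $B(x,r)$ against a larger ball, which self-improves to $(PI)$ using $(VD)$. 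I expect the entire difficulty to be concentrated in $(3)\Rightarrow(2)$: the subsolution mean value inequality is a lengthy but routine iteration once the Sobolev inequality is available, whereas the supersolution/$\log u$ estimate, the verification of the Bombieri--Giusti hypotheses in the parabolic setting, and keeping all constants scale-invariant under only $(VD)+(PI)$ require real care.
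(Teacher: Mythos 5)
The paper does not prove this theorem at all: it is quoted as a known result, a combination of \cite{Grigoryan 1991} and \cite{SC 1992} (building on Moser and Kusuoka--Stroock), so the only ``paper proof'' is the citation. Your outline --- $(VD)+(PI)\Rightarrow$ scale-invariant Sobolev $\Rightarrow$ Moser iteration with the $\log u$/Bombieri--Giusti step to get (PHI), then on-diagonal bounds, Davies weighting and Harnack chaining for (LY), and finally (VD) plus a weak Poincar\'e inequality self-improving via a Whitney-type covering for the converse --- is exactly the standard argument of those references, so, at the level of a roadmap, it is correct and takes the same route as the proof the paper relies on.
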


Combining the above two theorems, the connected sum 
$M=M_1\#M_2=\mathbb{R}^3 \# \mathbb{R}^3$ satisfies neither (PI) nor (LY). 
Indeed, the function
\begin{equation*}
f(x)=\left\{ 
\begin{array}{cl}
1 & x\in M_1, \\
-1& x \in M_2 
\end{array} 
\right.
\end{equation*}
implies that $f_{B(o, r)}=0$ for a central reference point $o \in M$ and
\begin{equation*}
\int_{B(o,r)} | f-f_{B(o,r)}|^2 d\mu \simeq r^3, \quad \int_{B(o,  r)} | \nabla f |^2 d\mu 
\simeq const,
\end{equation*}
which fails (PI). Here $f \simeq g$ means 
\begin{equation*}
c f \leq g \leq C f
\end{equation*}
with some positive constants $0 <c \leq C$ on a suitable range of functions $f$, $g$.
Moreover, Benjamini, Chavel and Feldman \cite{Benjamini-Chavel-Feldman} 
obtained in 1996 the following heat kernel estimate.
\begin{theorem}
For $n\geq 3$, let $M=M_1\# M_2=\mathbb{R}^n \# \mathbb{R}^n$. There exists $\varepsilon >0$ such that 
for $x \in M_1$, $y \in M_2$ with $|x| \simeq |y| \simeq \sqrt{t}$, 
\begin{equation*} 
p(t, x, y ) \leq \frac{1}{t^{\frac{n+\varepsilon}{2}}} \ll \frac{1}{t^{n/2}}.
\end{equation*}
\end{theorem}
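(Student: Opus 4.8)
The plan is to exploit the bottleneck of $M$. Fix a compact set $K\subset M$ for which $M\setminus K$ has exactly two connected components $\Omega_1,\Omega_2$, with $\Omega_i$ containing all of $M_i$ outside a fixed ball $B(o,R_0)$ and isometric outside a compact set to $\mathbb{R}^n$ minus a ball; then every path between $\Omega_1$ and $\Omega_2$ meets $K$. Up to adjusting constants one may assume $t$ large and $x\in\Omega_1$, $y\in\Omega_2$ with $|x|\simeq|y|\simeq\sqrt t$ (for $t$ in a compact range the asserted bound is trivial, since $d(x,y)$ is then bounded below and $p(t,x,y)$ is correspondingly controlled by the heat kernel upper bound). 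I will also use the Gaussian upper estimate
\begin{equation*}
p(\sigma,z,w)\le C\,\sigma^{-n/2}e^{-c\,d(z,w)^2/\sigma}\qquad(z,w\in M,\ \sigma>0),
\end{equation*}
which holds on $M=\mathbb{R}^n\#\mathbb{R}^n$: one has $V(z,r)\simeq r^n$ for all $z$ and $r$, and $M$ satisfies the $\mathbb{R}^n$-type Faber--Krahn (equivalently Nash) inequality --- which is stable under the compact surgery producing the neck --- so that $p(\sigma,z,z)\le C\sigma^{-n/2}$, and the standard argument deriving Gaussian upper bounds from the on-diagonal bound and the Davies--Gaffney estimate then supplies the off-diagonal factor.

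First I would write down the first-passage decomposition across the neck. Let $p_{\Omega_1}$ be the Dirichlet heat kernel of $\Omega_1$ and let $\omega_x(ds,dz)=-\partial_{\nu_z}p_{\Omega_1}(s,x,z)\,d\sigma(z)\,ds$ be the corresponding boundary flux on $\partial K$ (equivalently, the joint law under $\mathbb{P}_x$ of the first hitting time $\tau_K$ of $K$ and the hitting position $X_{\tau_K}$). Since $\Omega_2$ is separated from $\Omega_1$ by $K$, Duhamel's formula (equivalently, the strong Markov property) gives
\begin{equation*}
p(t,x,y)=\int_0^t\!\!\int_{\partial K}\omega_x(ds,dz)\,p(t-s,z,y).
\end{equation*}
For $z\in\partial K$ and $t$ large, $d(z,y)\ge|y|-\operatorname{diam}K\ge\tfrac12|y|\ge c_1\sqrt t$, so the Gaussian bound yields $p(t-s,z,y)\le C(t-s)^{-n/2}e^{-c_2t/(t-s)}$, and since $\sup_{0<u\le t}u^{-n/2}e^{-c_2t/u}\le C_3\,t^{-n/2}$ (elementary), this is $\le C_3\,t^{-n/2}$ uniformly in $s\in[0,t)$ and $z\in\partial K$. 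Hence
\begin{equation*}
p(t,x,y)\le C_3\,t^{-n/2}\int_0^t\!\!\int_{\partial K}\omega_x(ds,dz)=C_3\,t^{-n/2}\,\mathbb{P}_x(\tau_K\le t)\le C_3\,t^{-n/2}\,\mathbb{P}_x(\tau_K<\infty).
\end{equation*}

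Then I would estimate the hitting probability by a capacity computation in $\mathbb{R}^n$. Before time $\tau_K$ the process stays in $\Omega_1$, which outside a compact set is $\mathbb{R}^n$ minus a ball; from $x$ it must therefore first enter $B(o,R_0)\supset K$, and the classical escape probability for Brownian motion in $\mathbb{R}^n\setminus B(0,R_0)$ with $n\ge3$ gives $\mathbb{P}_x\bigl(\text{hit }B(o,R_0)\bigr)\le(R_0/|x|)^{n-2}$. Thus $\mathbb{P}_x(\tau_K<\infty)\le C_4|x|^{2-n}\le C_4'\,t^{(2-n)/2}$, using $|x|\simeq\sqrt t$. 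Combining the last two displays,
\begin{equation*}
p(t,x,y)\le C\,t^{-n/2}\cdot t^{(2-n)/2}=C\,t^{1-n}=C\,t^{-(n+(n-2))/2},
\end{equation*}
which, for $t$ large, is at most $t^{-(n+\varepsilon)/2}$ for any $\varepsilon\in(0,n-2)$; since $n\ge3$ one may take, say, $\varepsilon=(n-2)/2>0$, and this is indeed $\ll t^{-n/2}$.

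The main obstacle is the Gaussian upper bound used as input. Because $(PI)$ --- hence $(LY)$ --- fails on $\mathbb{R}^n\#\mathbb{R}^n$, Theorem~\ref{TVD+PI} is unavailable, so one must verify the (genuinely weaker) Faber--Krahn/Nash inequality on $M$ directly and then run the standard heat kernel upper bound argument; and the off-diagonal Gaussian factor is essential here, since without it the integral in the first-passage decomposition diverges near $s=t$ as soon as $n\ge2$. The remaining ingredients --- disconnecting the two ends by a compact $K$, the first-passage identity, and the exterior-of-a-ball escape probability --- are entirely standard.
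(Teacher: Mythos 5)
Your argument is correct, and it is essentially self-contained, which the paper's treatment of this statement is not: the survey quotes the bound from Benjamini--Chavel--Feldman \cite{Benjamini-Chavel-Feldman} without proof, and within its own framework it later derives the stronger two-sided estimate $p(t,x,y)\simeq t^{-n/2}\left(|x|^{2-n}+|y|^{2-n}\right)e^{-bd^2(x,y)/t}$ by substituting $p(t,o,o)\simeq t^{-n/2}$ into the off-diagonal formula of Theorem \ref{off-diagonal}, which in turn rests on the exit-probability and Dirichlet heat kernel estimates of \cite{G-SC Dirichlet,G-SC hitting}. Your route --- strong Markov decomposition at the first hitting of the neck, a global Gaussian upper bound on $M$, and the classical escape probability $\mathbb{P}_x(\tau_K<\infty)\le C|x|^{2-n}$ for $n\ge 3$ --- is closer in spirit to the original bottleneck argument and already yields the sharp order $t^{1-n}$ for this configuration (so any $\varepsilon<n-2$ works for large $t$), though it gives no matching lower bound and no separate dependence on $|x|$ and $|y|$. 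The one input you rightly flag as nontrivial, the Gaussian upper bound $p(\sigma,z,w)\le C\sigma^{-n/2}e^{-c\,d^2(z,w)/\sigma}$ on $\mathbb{R}^n\#\mathbb{R}^n$, indeed cannot come from Theorem \ref{TVD+PI} since (PI) fails, but it does hold: the Euclidean-type Faber--Krahn/Sobolev inequality survives the compact surgery for $n\ge3$ (this is exactly the kind of statement covered by \cite{G-SC FK}, see also \cite{G-SC letter}), and the standard on-diagonal-to-Gaussian upgrade then applies; your use of the off-diagonal factor via $\sup_{0<u\le t}u^{-n/2}e^{-c_2t/u}\le C t^{-n/2}$ is precisely what controls the otherwise divergent contribution near $s=t$. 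The only cosmetic point is that the statement, having no constant in front of $t^{-(n+\varepsilon)/2}$, should be read for $t$ large or with $\varepsilon$ slightly reduced to absorb constants, exactly as you do.
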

This theorem asserts that the heat kernel between two different ends is significantly smaller 
than that on one end because of a 
\textit{bottleneck effect}.

In view of the above facts, it is natural to ask on a manifold with ends the behavior of the heat kernel 
$p(t,x,y)$ and 
the estimate of
\begin{equation}
\Lambda(B(x,r)):= 
\sup_{\substack{ f\in C^{1}(\overline{B(x,r)}) 
\\ f\neq \mathrm{const}}}
\frac{ \int_{B(x,r)} | f-f_{B(x,r)} |^2 d\mu}{ \int_{B(x,  r)} |\nabla f |^2 d\mu } ,
\label{PC}
\end{equation}
which is called the \textit{Poincar\'e constant}. 

\textbf{Notation.} 
Throughout this article, the letters $c, c^\prime, C, C^\prime, C^{\prime \prime}$ denote positive constants whose values 
may be different at different instances. When the value of a constant is significant, it will be explicitly stated.
\section{The state of the art}
%%%%%%%%%%%%%%
\subsection{Setting}
First of all, we begin with the definition of what we call a manifold with finitely many ends.
For  a fixed integer $k\geq 2$, let $M_{1},...,M_{k}$ be a sequence of
geodesically complete, non-compact weighted manifolds of the same dimension.

\begin{definition}
We say that a weighted manifold $M$ is a manifold with $k$ ends $%
M_{1},M_{2},\ldots M_{k}$ and write 
\begin{equation}
M=M_{1}\#...\#M_{k}  \label{M1Mk}
\end{equation}%
if there is a compact set $K\subset M$ so that $M\setminus K$ consists of $k$
connected components $E_{1},E_{2},\ldots ,E_{k}$ such that each $E_{i}$ is
isometric (as a weighted manifold) to $M_{i}\setminus K_{i}$ for some
compact set $K_{i}\subset M_{i}$ (see Fig. \ref{figure: connectedsum}). Each 
$E_{i}$ (or $M_i$)  will be referred to as an \emph{end} of $M$.
\end{definition}
\vspace*{-20mm}
\begin{figure}[h]
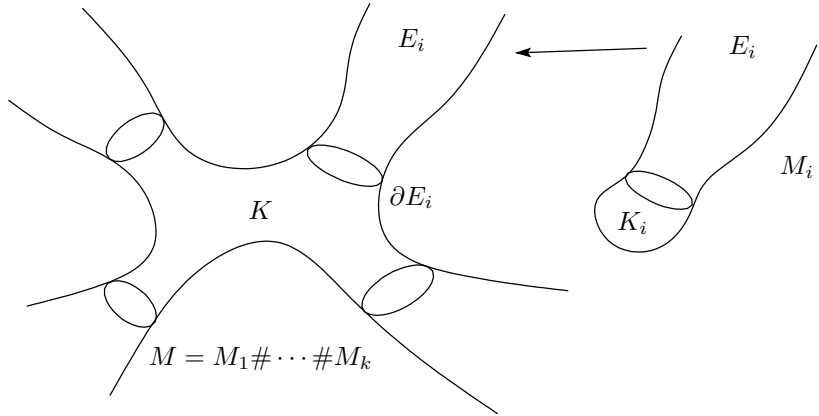

\hspace*{-20mm}
\vspace*{30mm}
\scalebox{1.8}{
\input connectedsum.tex
}
\caption{Manifold with ends}
\label{figure: connectedsum}
\end{figure}
Here we remark that the definition of end given above is different from the usual notion defined as a connected component of the
ideal boundary.

We say that a manifold $M$ is \textit{parabolic} if any positive superharmonic function on $M$ is constant, and 
\textit{non-parabolic} otherwise. See \cite{G 1999} for  details. 

Throughout  this article, we always assume that each end $M_i$ 
satisfies (VD) and (PI). Moreover, if  the end $M_i$ is parabolic, then we also assume that $M_i$ satisfies the  
\textit{relatively connected annuli} condition defined as follows.
\begin{definition}[(RCA)]
A weighted manifold $M$  
satisfies \textit{relatively connected annuli}  condition ((RCA) in short) 
with respect to a reference point $o\in M$ 
if there exists a positive constant $A>1$ 
such that for any $r>A^2$ and all $x,y \in M$ with $d(o, x)=d(o, y)=r$, there exists a continuous path from $x$ to $y$ 
staying in $B(o, Ar) \backslash B(o, A^{-1}r)$. See Fig. \ref{figure: RCA} and \ref{figure: RCA2} for typical positive and negative 
examples. 
\end{definition}

\begin{figure}[tbph]
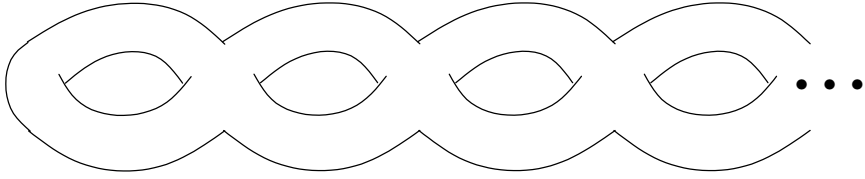

\begin{center}
%\scalebox{0.57}{
\input RCA.tex
%\includegraphics{RCA.eps} 
%}
\end{center}
\caption{Manifold with (RCA)}
\label{figure: RCA}
\end{figure}

\begin{figure}[tbph]
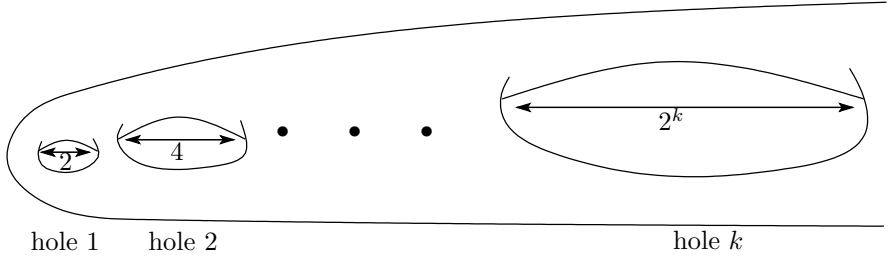

\begin{center}
%\scalebox{0.7}{
\input RCA2.tex
%\includegraphics{RCA2.eps} 
%}
\end{center}
\caption{Manifold without (RCA)}
\label{figure: RCA2}
\end{figure}

The assumption (RCA) seems technical but it makes it possible to obtain an optimal estimates of 
the first exit (hitting) probability and the Dirichlet heat kernel in the 
exterior of a compact set of a parabolic manifold satisfying (LY). See \cite{G-SC Dirichlet} and \cite{G-SC hitting} for  details.

\subsection{Heat kernel estimates}
\subsubsection{Off-diagonal estimates}
Let $M=M_1\# \cdots \# M_k$ be a manifold with $k$ ends. 
For  $t>0$, $x \in  E_i$ and $y \in M$, let $p^D_{E_i}(t, x, y)$ be the extended Dirichlet heat kernel on an end $E_i$, 
that is, the Dirichlet heat kernel in  $y \in E_i$ and extension to $0$ if $y \not \in E_i$.  
Let $\tau_{E_i}$ be the first exit time of the Brownian motion from $E_i$ and then 
$\mathbb{P}_x(\tau_{E_i}<t)$ is the 
first exit probability starting from $x$ by time $t$ from $E_i$.
We will use the following theorem to estimate the off-diagonal heat kernel estimates. 
%For any open set $\Omega \subset M$, let $\Psi_{\Omega}(x, y)$ be the exit probability function in $\Omega$.
\begin{theorem}[Grigor'yan and Saloff-Coste \mbox{\cite[Theorem 3.5]{G-SC ends}}]
\label{off-diagonal}
Let \\
$M=M_1 \# \cdots \# M_k$ be a manifold  with $k$ ends and fix a central reference point $o \in K$. 
For $x \in E_i$, $y \in E_j$ and $t>1$, 
\begin{align}
p(t, x, y) \simeq & p^D_{E_i}(t, x, y) +p(t, o, o)\mathbb{P}_x( \tau_{E_i} <t) \mathbb{P}_y( \tau_{E_j} <t)  \notag\\
 & \!\!\! +\int_0^t p(s, o, o)ds \left( \partial_t \mathbb{P}_x (\tau_{E_i}<t)  \mathbb{P}_y( \tau_{E_j} <t) 
+\mathbb{P}_x (\tau_{E_i}<t) \partial_t \mathbb{P}_{y} (\tau_{E_j} <t) \right).
\label{off-diagonal estimates}
\end{align}
\end{theorem}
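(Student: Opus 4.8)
The plan is to prove the two-sided estimate \eqref{off-diagonal estimates} by a probabilistic decomposition of the Brownian path according to its behavior relative to the end $E_i$ containing $x$, followed by a symmetric analysis near $y$. First I would fix the central point $o$ and observe that for $x\in E_i$ and $y\in E_j$ with $i\ne j$ (the case $i=j$ being an easy variant), any path from $x$ to $y$ must exit $E_i$; thus $p^D_{E_i}(t,x,y)=0$ when $i\ne j$ and the whole heat kernel comes from paths crossing the central region. The natural tool is the \emph{parabolic maximum principle} together with a Duhamel-type (last-exit / first-entrance) decomposition: writing $\tau=\tau_{E_i}$, one has the exact identity
\begin{equation*}
p(t,x,y)=p^D_{E_i}(t,x,y)+\mathbb{E}_x\!\left[\mathbf{1}_{\{\tau<t\}}\,p(t-\tau,X_\tau,y)\right],
\end{equation*}
so the problem reduces to estimating the expectation on the right, i.e. controlling $p(s,z,y)$ for $z$ on the ``inner boundary'' of $E_i$ (effectively $z$ near $o$) and $s\in(0,t)$, integrated against the law of $(\tau,X_\tau)$.

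Next I would reduce the boundary point $X_\tau$ to the reference point $o$. Using (VD) and (PI) on each end, the uniform Harnack-type estimates (LY) hold on a fixed neighborhood of $o$ inside the ``gluing'' region, so $p(s,z,y)\asymp p(s,o,y)$ for $z$ in that region and $s$ bounded below; the short-time contribution $s\in(0,1)$ is negligible because $t>1$ and heat escapes the bottleneck slowly. Thus $\mathbb{E}_x[\mathbf{1}_{\{\tau<t\}}p(t-\tau,X_\tau,y)]\asymp \mathbb{E}_x[\mathbf{1}_{\{\tau<t\}}p(t-\tau,o,y)]$. Then I would run the symmetric argument at $y$: decompose $p(s,o,y)$ via $\tau_{E_j}$ for the reversed path, getting a factor involving $\mathbb{P}_y(\tau_{E_j}<\cdot)$ and reducing the second spatial argument to $o$ as well, so that the double integral collapses to a convolution in time of $p(\cdot,o,o)$ against the densities of $\tau_{E_i}$ and $\tau_{E_j}$. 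Carefully splitting the time integral $\int_0^t$ according to whether the path spends most of its time before crossing, between the two crossings, or after — and using monotonicity of $s\mapsto p(s,o,o)$ and of the exit probabilities — yields the three terms on the right-hand side of \eqref{off-diagonal estimates}: the term $p(t,o,o)\mathbb{P}_x(\tau_{E_i}<t)\mathbb{P}_y(\tau_{E_j}<t)$ from the ``balanced'' regime, and the two terms with $\int_0^t p(s,o,o)\,ds$ times a time-derivative of one exit probability from the regimes where one crossing happens late.

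The main obstacle, I expect, is making the reduction ``$X_\tau \rightsquigarrow o$'' and the time-convolution bookkeeping rigorous \emph{uniformly} in $s\in(0,t)$, since near $s=0$ the heat kernel $p(s,o,y)$ is not comparable to $p(s,o,o)$ and one must genuinely exploit that crossing the bottleneck takes time of order at least a constant; this is where (RCA) for parabolic ends enters, to guarantee that the exit distribution from $E_i$ concentrates near $o$ and that the hitting-time density $\partial_t\mathbb{P}_x(\tau_{E_i}<t)$ has the clean self-similar form needed. A secondary difficulty is the lower bound: one must produce an explicit family of competitor paths realizing each of the three regimes and invoke the local (LY) estimates and the strong Markov property to bound $p(t,x,y)$ from below by each term separately, then add. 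I would organize the write-up by first proving the exact Duhamel identity, then the boundary-to-$o$ comparison as a lemma, then the time-splitting, handling upper and lower bounds in parallel; the bulk of the technical work (the hitting probability and Dirichlet heat kernel estimates that feed the comparison step) I would quote from \cite{G-SC Dirichlet} and \cite{G-SC hitting} rather than reprove.
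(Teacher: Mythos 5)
This survey states Theorem \ref{off-diagonal} by citation and contains no proof of it, so your sketch has to be measured against the argument in \cite[Theorem 3.5]{G-SC ends} — and in outline you follow the same route: the exact first-entrance (strong Markov/Duhamel) identity at the boundary of $E_i$, replacement of the entrance point $X_\tau$ by the fixed point $o$ via a local Harnack argument on a neighborhood of the compact set $K$, the symmetric decomposition at $y$, and then an analysis of the resulting time convolution of $\partial_s\mathbb{P}_x(\tau_{E_i}<s)$, $\partial_s\mathbb{P}_y(\tau_{E_j}<s)$ and $p(\cdot,o,o)$, which is what produces the three terms of (\ref{off-diagonal estimates}). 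So the architecture is right and essentially identical to the original proof.

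Two points, however, need correction. First, (RCA) plays no role in this theorem: the two-sided decomposition (\ref{off-diagonal estimates}) is a general fact about connected sums, using only local regularity near the compact $K$; the hypotheses (VD), (PI) and (RCA) enter only afterwards, to estimate the individual ingredients $p^D_{E_i}(t,x,y)$, $\mathbb{P}_x(\tau_{E_i}<t)$, $\partial_t\mathbb{P}_x(\tau_{E_i}<t)$ via \cite{G-SC Dirichlet} and \cite{G-SC hitting}, exactly as the survey remarks right after the statement. Your appeal to (RCA) to make ``the exit distribution concentrate near $o$'' is not how the argument goes and would wrongly restrict the theorem. Second, the step you describe as $p(s,z,y)\simeq p(s,o,y)$ for $z$ near $K$ at the \emph{same} time $s$, together with the claim that the contribution of $s\in(0,1)$ is negligible, hides the genuinely delicate part: the local parabolic Harnack inequality only gives comparisons with a time shift of order $1$ (e.g. $p(s,z,y)\le C\,p(s+1,o,y)$), the reverse-in-time comparison is not automatic, and the short-time contribution is not obviously negligible when $x$ or $y$ sits close to $K$. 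In \cite{G-SC ends} these issues are resolved through monotonicity of $t\mapsto p(t,o,o)$, doubling-in-time properties of $t\mapsto\mathbb{P}_x(\tau_{E_i}<t)$, and explicit lemmas on convolutions of monotone functions, which is where most of the technical work lies; your plan correctly flags this as the main obstacle but does not indicate how to overcome it, so as written it is a faithful outline of the known proof rather than a complete argument.
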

Under the assumption of (PI), (VD) on each end, and, in addition,  (RCA) on each parabolic end, applying results in 
\cite{G-SC Dirichlet} and \cite{G-SC hitting}, 
the quantities 
$p^D_{E_i}(t, x, y)$, $\mathbb{P}_x(\tau_{E_i}<t)$, $\mathbb{P}_y(\tau_{E_j}<t)$, 
$\partial_t \mathbb{P}_x(\tau_{E_i})$ and $\partial_t \mathbb{P}_y (\tau_{E_j} <t)$ can be estimated. 
Hence, estimating $p(t, o, o)$ becomes the key missing estimate to obtain  off-diagonal bounds on manifolds with ends.

%%%%%%%%%%%%%%%%%%%%%%%%%%%%%%%%%%%%%%%%%%%%%
\subsubsection{Non-parabolic case}
First we consider heat kernel estimates on $M=M_1\# \cdots \# M_k$, where $M$ is non-parabolic, 
namely, at least one end $M_i$ is non-parabolic. 

For a fixed reference point $o_i \in K_i \subset M_i$, let 
\begin{equation*}
V_i(r):=\mu(B(o_i, r))
\end{equation*}
and 
\begin{equation*}
h_i(r):= 1+ \left(\int_1^r \frac{sds}{V_i(s)}\right)_{+}.
\end{equation*}
Here we remark that, under the assumption (LY), $M_i$ is parabolic if and only if
\begin{equation*}
\lim_{r\rightarrow \infty} h_i(r) =\infty.
\end{equation*}
In 2009,  Grigor'yan and Saloff-Coste \cite{G-SC ends} obtained the following (see also \cite{G-SC FK}).
\begin{theorem} Let $M=M_1 \# \cdots \# M_k$ be a manifold with $k$ ends.
Assume that each end $M_i$ satisfies (PI) and (VD) and that each parabolic end satisfies (RCA).
Assume also that $M$ is non-parabolic. 
Then for all $t>0$,
\begin{equation*}
p(t,o,o) \simeq \frac{1}{\min_i V_i(\sqrt{t}) h_i^2(\sqrt{t})} .
\end{equation*}
\end{theorem}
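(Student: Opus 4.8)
The plan is to derive the formula for $p(t,o,o)$ from the off-diagonal representation of Theorem \ref{off-diagonal} applied with $i=j$ and $x=y=o$, together with the known estimates for the Dirichlet heat kernel $p^D_{E_i}$ and the exit probabilities $\mathbb{P}_x(\tau_{E_i}<t)$ coming from \cite{G-SC Dirichlet} and \cite{G-SC hitting}. However, since $o\in K$ lies in the compact core rather than in a single end, a cleaner route is to use the gluing/comparison principle of \cite{G-SC ends}: the global heat kernel at the central point is comparable to a weighted combination of the on-diagonal heat kernels of the individual ends $M_i$, and one shows that the dominant contribution is the one making $p(t,o,o)$ as \emph{large} as possible, i.e. the end with the slowest heat decay. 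First I would record that under (PI)+(VD) on $M_i$ one has the Li--Yau bound on $M_i$, hence $p_{M_i}(t,o_i,o_i)\simeq 1/V_i(\sqrt t)$; the extra factor $h_i^2(\sqrt t)$ is precisely the correction produced by the bottleneck, reflecting how often Brownian motion returns to the core from the end $M_i$.

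The key steps, in order, would be: (1) Fix the central point $o\in K$ and for each end write the first-return decomposition of the heat kernel: a path from $o$ to $o$ in time $t$ either stays in (a neighbourhood of) the core, or exits into some end $E_i$ and returns. This leads to a renewal-type identity expressing $p(t,o,o)$ in terms of $\int_0^t p(s,o,o)\,ds$ and the return probabilities $\mathbb{P}_{o}(\tau_{E_i}<t)$, exactly mirroring \eqref{off-diagonal estimates} with $i=j$. (2) Insert the sharp estimates for the hitting data: under (LY) on $M_i$ (plus (RCA) if $M_i$ is parabolic), one has from \cite{G-SC hitting} that the probability of a long excursion into $E_i$ that returns is governed by $h_i$, and quantitatively $\mathbb{P}_{o}(\tau_{E_i}<t)\simeq$ an explicit expression in $V_i$ and $h_i$; similarly for $\partial_t\mathbb{P}_{o}(\tau_{E_i}<t)$. (3) Solve (or rather, estimate both sides of) the resulting renewal inequality. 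Writing $Q(t)=\int_0^t p(s,o,o)\,ds$, the identity has the schematic form $p(t,o,o)\simeq \sum_i \big[\text{local term}\big] + Q(t)\cdot\big[\text{something}\big]$, and a Tauberian/bootstrap argument — splitting into the non-parabolic regime where $Q(t)$ stays bounded and the general case — yields $p(t,o,o)\simeq 1/\min_i\big(V_i(\sqrt t)h_i^2(\sqrt t)\big)$. (4) Verify the matching lower bound: it suffices to exhibit, for the end $M_{i_0}$ realizing the minimum, enough return mass, which follows from the lower Gaussian bound on $M_{i_0}$ and the (RCA)/bottleneck-crossing lower estimates; the upper bound is the harder direction and uses that contributions from \emph{every} end must be controlled simultaneously, the $\min$ arising because the largest individual return contribution dominates the sum up to constants.

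The main obstacle I expect is Step (3): converting the renewal-type relation \eqref{off-diagonal estimates} into the closed-form two-sided bound. The subtlety is that $\int_0^t p(s,o,o)\,ds$ appears on the right-hand side, so one is really solving an integral inequality for the unknown function $p(\cdot,o,o)$; in the non-parabolic case $h_i$ is bounded along the dominant end so $Q(t)$ is controlled and the argument is a direct substitution, but one must still check that the parabolic ends (which a priori could contribute a growing $Q$-term) do not spoil the estimate — this is exactly where the hypothesis that $M$ is non-parabolic, i.e. $\min_i$ being attained at a non-parabolic end with $h_i\simeq 1$ for large $t$, or at least $h_i$ growing strictly slower, is used. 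A secondary technical point is handling small versus large $t$ uniformly: for $t\le 1$ the claim reduces to the local (LY)-type on-diagonal estimate near $o$, which follows from (PI)+(VD) on a neighbourhood of $K$, while the interesting regime $t>1$ is where the machinery of \cite{G-SC Dirichlet,G-SC hitting} enters; I would treat these two ranges separately and then patch.
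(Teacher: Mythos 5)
The statement you address is quoted in the survey from \cite{G-SC ends} without proof, so your proposal must stand on its own, and as it stands it has a genuine gap at its core. Your engine is Theorem \ref{off-diagonal} read as a renewal identity at $x=y=o$, but that theorem requires $x\in E_i$, $y\in E_j$ (and $\mathbb{P}_o(\tau_{E_i}<t)$ is degenerate for $o\in K$), and, more importantly, its right-hand side already contains $p(t,o,o)$ and $\int_0^t p(s,o,o)\,ds$: the survey says explicitly that estimating $p(t,o,o)$ is ``the key missing estimate'' needed \emph{before} (\ref{off-diagonal estimates}) can be exploited, not a consequence of it. Your step (3) proposes to close this loop by an unspecified ``Tauberian/bootstrap'' argument, but an inequality of the schematic form $p(t,o,o)\le A(t)+B(t)\int_0^t p(s,o,o)\,ds$ does not self-improve to the claimed two-sided bound without substantial additional input; this is precisely where the actual proof in \cite{G-SC ends} (see also \cite{G-SC letter}, \cite{G-SC FK}) brings in different machinery --- gluing/surgery of Faber--Krahn inequalities of the ends for the upper bound, the exterior Dirichlet heat kernel and hitting estimates of \cite{G-SC Dirichlet}, \cite{G-SC hitting}, and on-diagonal lower bound techniques in the spirit of \cite{Coulhon-Grigoryan} for the matching lower bound. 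None of that appears in the sketch, so the hard direction (the upper bound with the $\min_i V_i h_i^2$ on the right) is not actually proved.

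There is also a concrete misstatement about where the hypothesis that $M$ is non-parabolic enters. You say it is used via ``$\min_i$ being attained at a non-parabolic end with $h_i\simeq 1$,'' but this is false: the minimum of $V_i(\sqrt t)\,h_i^2(\sqrt t)$ may well be attained at a parabolic end. For example, for $M=\mathbb{R}^3\#\mathbb{R}^2$ one has $V_1(r)h_1^2(r)\simeq r^3$ and $V_2(r)h_2^2(r)\simeq r^2(\log r)^2$, so the theorem gives $p(t,o,o)\simeq \bigl(t\,(\log t)^2\bigr)^{-1}$, governed by the parabolic end $\mathbb{R}^2$, even though $h_2$ is unbounded. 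What non-parabolicity of $M$ (i.e.\ at least one non-parabolic end) actually provides is finiteness of the Green function, $\int_0^\infty p(s,o,o)\,ds<\infty$, which is what tames the term $\int_0^t p(s,o,o)\,ds$ in any renewal-type relation; you mention that $Q(t)$ stays bounded but attribute it to the wrong mechanism, and the subsequent dominance analysis over the ends (why the sum of contributions is comparable to its largest term $1/\min_i V_i h_i^2$, uniformly in the mixed parabolic/non-parabolic situation) is asserted rather than established.
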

If all ends $M_1, \ldots , M_k$ are non-parabolic, then all functions $h_1, \ldots , h_k$ are bounded. Hence, the above theorem 
implies that
\begin{equation*}
p(t,o,o) \simeq \frac{1}{\min_i V_i(\sqrt{t})},
\end{equation*}
namely, the behavior of the heat kernel at the central reference point is determined by the \textbf{smallest end!}

As a typical example, let $M$ be $M_1\# M_2 =\mathbb{R}^n \#  \mathbb{R}^n$, the connected sum of two copies of $\mathbb{R}^n$ with 
$n\geq 3$. Then the above theorem 
implies that
\begin{equation*}
p(t, o, o) \simeq \frac{1}{t^{n/2}}.
\end{equation*}
Substituting this estimates into Theorem \ref{off-diagonal}, we obtain that for $x \in M_1$, $y \in M_2$, 
\begin{equation*}
p(t, x, y ) \simeq \frac{1}{t^{n/2}} \left( \frac{1}{| x|^{n-2}} +\frac{1}{|y|^{n-2}} \right) e^{-b d^2(x,y)/t},
\end{equation*}
where $|x|=1+d(x, K)$. 
%%%%%%%%%%%%%%%%%%%%%%%%%%%%%%%%
\subsubsection{Parabolic case}
Next, we consider the case of  manifolds with ends, $M=M_1 \# \cdots \# M_k$, which are parabolic, that is, for which  
 all ends $M_1, \ldots , M_k$  are parabolic.
To prove an optimal heat kernel estimates, 
we need the following assumptions on each end.
\begin{definition}[c.f. \cite{GIS Poincare}]
\label{def sub}
An end  $M_i$ is called \textit{subcritical} if 
\begin{equation*}
h_i(r) \leq C\frac{r^2}{V_i(r)}\quad~(\forall r >1)
\end{equation*}
and \textit{regular} if there exist $\gamma_1, \gamma_2>0$ satisfying $2\gamma_1+\gamma_2 <2$ 
such that 
\begin{equation}
c\left( \frac{R}{r} \right)^{2-\gamma_2} \leq \frac{V_i(R)}{V_i(r)} \leq C \left( \frac{R}{r} \right)^{2+\gamma_1} \quad
(\forall 1<r \leq R).
\label{regular}
\end{equation}
\end{definition}
For example, a manifold $M$ with volume function $V(r)=r^{\alpha} \left( \log r \right)^{\beta}$ is parabolic if and only if
either $\alpha<2$ or $\alpha=2$ and $\beta\leq 1$. Moreover, 
$M$ is subcritical if $\alpha<2$ and regular if $\alpha=2$ and $\beta\leq 1$. 
We remark that if $M_i$ satisfies (VD), then the \textit{reverse doubling property} holds and that implies that  
for any subcritical end, 
there exists $\delta>0$ such that 
\begin{equation}
V_i(r) \leq Cr^{2-\delta} ~(\forall r>0). 
\label{sub delta}
\end{equation}

For $r>0$, let $m=m(r)$ be a number so that 
\begin{equation}
V_m(r) =\max_i V_i(r).
\label{largest}
\end{equation}
We can now state the following result.
\begin{theorem}[Grigor'yan, Ishiwata, Saloff-Coste  \cite{GIS Poincare}]
\label{theorem heat parab}
Let $M=M_1 \# \!\cdots \!\# M_k$ be a manifold with $k$ parabolic ends.
Assume that each end $M_i$ satisfies (PI), (VD), (RCA) and is either subcritical or regular.
If there exist both of subcritical and regular ends, assume also that the constant $\delta$ in (\ref{sub delta}) satisfies $\delta>\gamma_2$, namely, for any subcritical volume function  $V_i(r)$ and any regular volume function  $V_j(r)$, 
\begin{equation*}
V_i(r) \leq Cr^{2-\delta} \leq C^{\prime} r^{2-\gamma_2} \leq C^{\prime \prime} V_j (r) ~ (\forall r>0).
\end{equation*}
Moreover, assume that there exists an end $M_m$ such that 
for all $i=1, \ldots ,k$ and for all $r>0$
\begin{equation}
V_m(r) \geq c V_i(r) ~\mbox{and} ~ V_m(r)h_m^2(r) \leq C V_i(r) h_i^2(r).
\label{maximal}
\end{equation}
Then for $t>0$
\begin{equation}
p(t, o, o) \simeq \frac{1}{V_m(\sqrt{t})}.
\label{heat parab}
\end{equation}
\end{theorem}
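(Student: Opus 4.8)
The plan is to estimate the Laplace transform $\Phi(\lambda):=\int_{1}^{\infty}e^{-\lambda t}p(t,o,o)\,dt$ as $\lambda\to 0^{+}$ and then to transfer the result to $p(t,o,o)$ by a Tauberian argument, exploiting that $t\mapsto p(t,x,x)=\|p(t/2,x,\cdot)\|_{L^{2}(M)}^{2}$ is non-increasing. First I would record that, since $o$ lies in the compact core $K$, for $r\ge 1$ the ball $B(o,r)$ is comparable to $K$ together with a bounded part of each end, so by (VD) on the $M_{i}$ and the first inequality in \eqref{maximal},
\[
V(o,r)\ \asymp\ \sum_{i=1}^{k}V_{i}(r)\ \asymp\ V_{m}(r)\qquad(r\ge 1);
\]
thus \eqref{heat parab} is equivalent to $p(t,o,o)\asymp 1/V(o,\sqrt t)$. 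I would also note that, each $M_{i}$ being parabolic, $M$ itself is parabolic (this is standard: $M$ is parabolic iff every $M_{i}$ is), so $\Phi(\lambda)\to\infty$ as $\lambda\to 0$ and the task is to pin down the rate.

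The crux is to prove, as $\lambda\to 0^{+}$,
\[
\Phi(\lambda)\ \asymp\ \Big(\,\sum_{i=1}^{k}\frac{1}{h_{i}(\lambda^{-1/2})}\,\Big)^{-1}\ \asymp\ \min_{1\le i\le k}h_{i}(\lambda^{-1/2})\ \asymp\ h_{m}(\lambda^{-1/2}),
\]
the last comparison being immediate from \eqref{maximal} (dividing its two inequalities gives $h_{m}\le C h_{i}$ for every $i$, so $M_{m}$ realizes the minimum). The middle expression is a \emph{resistances-in-parallel} law: Brownian motion sitting at $o$ leaves through the $k$ ends essentially independently, and for a single end $M_{i}$ the regularized $\lambda$-resolvent at the reference point has size $\int_{1}^{1/\lambda}V_{i}(\sqrt s)^{-1}\,ds\asymp h_{i}(\lambda^{-1/2})$ --- which is precisely the Li--Yau estimate on $M_{i}$ (available from (PI)+(VD)) together with its transplant to the exterior $E_{i}$ of $K$, for which one invokes the Dirichlet-heat-kernel and hitting-probability bounds of \cite{G-SC Dirichlet} and \cite{G-SC hitting} (this is where (RCA) on the parabolic ends is used). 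Converting these per-end facts into the stated bound for the resolvent of $M$ is the genuinely hard step: it requires solving the relevant equilibrium problem on the connected sum, absorbing the $O(1)$ contribution of $K$ (harmless since $\Phi(\lambda)\to\infty$) and ruling out cross-terms between distinct ends; the assumptions that each end is subcritical or regular, plus the compatibility $\delta>\gamma_{2}$ when both occur, serve to keep all the $V_{i}$, $h_{i}$ regularly varying, so that the per-end estimates are stable and assemble cleanly. This parabolic ``parallel law'' is the counterpart of the estimate that underlies the non-parabolic result recalled above, and is where I expect the main difficulty.

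Granting the displayed estimate, \eqref{heat parab} follows. Since $h_{m}$ is regularly varying at $\infty$ (again by the subcriticality/regularity of $M_{m}$), $\lambda\mapsto h_{m}(\lambda^{-1/2})$ is regularly varying at $0$, and Karamata's Tauberian theorem gives $\int_{1}^{t}p(s,o,o)\,ds\asymp h_{m}(\sqrt t)$. Combining this with the monotonicity of $p(\cdot,o,o)$ --- in the slowly varying (regular) case through a finite-difference refinement $\Phi(\lambda)-\Phi(2\lambda)\asymp h_{m}(\lambda^{-1/2})-h_{m}((2\lambda)^{-1/2})$ that recovers the sharp order --- and the elementary identity
\[
\frac{d}{dt}h_{m}(\sqrt t)\ =\ \frac{h_{m}'(\sqrt t)}{2\sqrt t}\ =\ \frac{1}{2\,V_{m}(\sqrt t)}\qquad\big(\text{since }h_{m}'(r)=r/V_{m}(r)\big),
\]
one obtains $p(t,o,o)\asymp \frac{d}{dt}h_{m}(\sqrt t)\asymp 1/V_{m}(\sqrt t)$, which is \eqref{heat parab}.
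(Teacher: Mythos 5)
Your plan has two gaps, one acknowledged and one not, and the second is fatal to the final step even if the first were filled. The acknowledged gap is that the ``parallel resistance'' estimate $\Phi(\lambda)\asymp h_m(\lambda^{-1/2})$ is only asserted: gluing the per-end resolvent bounds across the compact core and controlling the cross-terms is exactly the analytic content of the theorem, so at this stage nothing has been proved. The unacknowledged gap is the Tauberian transfer. Granting $\int_1^t p(s,o,o)\,ds\asymp h_m(\sqrt t)$, monotonicity of $p(\cdot,o,o)$ only yields $p(t,o,o)\le \frac{2}{t}\int_{t/2}^{t}p(s,o,o)\,ds\le C\,h_m(\sqrt t)/t$, which matches $1/V_m(\sqrt t)$ precisely when $V_m(r)h_m(r)\asymp r^2$, i.e.\ in the subcritical case; for a regular end such as $V_m(r)=r^2$ one has $h_m(r)\asymp\log r$ and the bound is off by a factor $\log t$. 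The lower bound is worse: the ``finite-difference refinement'' $\Phi(\lambda)-\Phi(2\lambda)\asymp h_m(\lambda^{-1/2})-h_m((2\lambda)^{-1/2})$ cannot be deduced from two-sided comparisons with different constants, because in the regular case $h_m$ is slowly varying, so the increment $h_m(\sqrt{2t})-h_m(\sqrt t)$ stays bounded while the ambiguity $(C-c)\,h_m(\sqrt t)$ in the hypothesis tends to infinity; Karamata's theorem transfers genuine asymptotic equivalence, not $\asymp$, and a relation $\asymp$ cannot be differentiated. So the route through the time-integrated kernel loses exactly the logarithmic factors that make the parabolic/critical case delicate.

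For comparison, the proof indicated in the paper avoids the resolvent altogether: the upper bound \eqref{min min} (proved in \cite{GIS Poincare}) combined with the dominating-end hypothesis \eqref{maximal} gives \eqref{min min 2}, i.e.\ the pointwise estimate $p(t,o,o)\le C/V_m(\sqrt t)\asymp C/V(o,\sqrt t)$ directly, and the matching on-diagonal lower bound then follows from \cite[Theorem 7.2]{Coulhon-Grigoryan}, which converts a doubling on-diagonal \emph{upper} bound of the order of $1/V(o,\sqrt t)$ into a lower bound of the same order, with no Tauberian step and hence no loss on slowly varying corrections. If you want to salvage your approach you would need a pointwise (not integrated) upper bound of the correct order before invoking any lower-bound machinery; at that point you are essentially reproducing \eqref{min min} plus \cite{Coulhon-Grigoryan}, and the resolvent computation becomes superfluous.
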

This means that the on-diagonal heat kernel estimates at the central reference point 
is determined by the \textbf{largest end!}
%%%%%%%%%%%%%%%%%%%%%%%%%%%%%%%%%%%%%%%%%%%%%
\begin{remark}In our approach, we require the existence of  a fixed dominating end given by (\ref{maximal}) 
 for the optimal estimates in (\ref{heat parab}) to hold. 
Indeed, more generally, on a manifold with either regular or subcritical ends, we obtain for $t>1$, 
(see \cite{GIS Poincare} for the detail)
\begin{equation}
p(t, o, o) \leq C\frac{ \min_i h_i^2(\sqrt{t})}{\min_i V_i(\sqrt{t}) h_i^2 (\sqrt{t})}.
\label{min min}
\end{equation}
The assumption in (\ref{maximal}) implies that
for all $r>1$
\begin{equation}
\frac{ \min_i h_i^2(r)}{\min_i V_i(r) h_i^2 (r)} 
\leq \frac{C}{V_m(r)},
\label{min min 2}
\end{equation}
which allows  to apply \cite[Theorem 7.2]{Coulhon-Grigoryan} for the matching lower bound. 
In Section \ref{sec osci}, we construct manifolds with ends without a fixed dominating end and, in
such cases,  
the estimates in (\ref{min min 2}) does not hold.
\end{remark}
As illustrative examples, let 
$M=M_1\# \cdots \# M_k$ be a manifold with parabolic ends, where each end $M_i$  satisfies (PI), (VD), (RCA). 
Let $\alpha_i$ and $\beta_i$ be sequences satisfying 
\begin{equation*}
(\alpha_1, \beta_1) \geq (\alpha_2, \beta_2) \geq \cdots \geq  (\alpha_k, \beta_k) >(0, +\infty)
\end{equation*} 
in the sense of  lexicographical order, namely $(\alpha_i, \beta_i) > (\alpha_j, \beta_j)$ means that
\begin{equation*}
\alpha_i > \alpha_{j} ~ \mbox{or} ~ \alpha_i=\alpha_{j} ~\mbox{and} ~ \beta_i >  \beta_{j}
\end{equation*}
and we assume that 
\begin{equation*} 
V_i(r)\simeq r^{\alpha_i}\left( \log r \right)^{\beta_i}, \quad r>2.
\end{equation*}
Here we need $(\alpha_1, \beta_1) \leq (2,1)$ so that all ends $M_1, \ldots, M_k$ are parabolic. 
Then the above theorem implies that 
\begin{equation*}
p(t,o,o) \simeq \frac{1}{t^{\alpha_1/2} \left( \log t \right)^{\beta_1} }, \quad t>2.
\end{equation*}
As an explicit example, suppose that $k=2$ and $(\alpha_1, \beta_1)=(2,0)$ and $(\alpha_2, \beta_2)=(1,0)$.
Substituting the above estimates into (\ref{off-diagonal estimates}), we obtain for $x \in E_1$, $y \in E_2$ and $t>1$
\begin{equation*}
p(t,x,y)\simeq 
\left\{
\begin{array}{ll}
\frac{1}{t} e^{-bd^2(x,y)/t} & \mbox{if  } |x|>\sqrt{t} \\
\frac{1}{t} \left( 1+ \frac{|y|}{\sqrt{t}}\log\frac{e\sqrt{t}}{|x|} \right) & \mbox{if }  |x|, |y| \leq \sqrt{t} \\
\frac{1}{t} \left( \log \frac{e\sqrt{t}}{|x|} \right) e^{-bd^2(x,y)/t} & \mbox{if } |x| \leq \sqrt{t} < |y|.
\end{array}
\right.
\end{equation*}
%%%%%%%%%%%%%%%%%%%%%%%%%%%%%%%%%%%%%%%%%%%%
\begin{remark}Assume that all ends of a manifold $M=M_1\# \cdots \# M_k$ are subcritical. Then for $x \in E_i$ and 
$y \in E_j$ with $i \neq j$ and $t>1$, 
\begin{equation*}
p(t,x,y) \asymp \frac{C}{V_m(\sqrt{t})} e^{-bd^2(x,y)/t}
\end{equation*}
(see \cite[Theorem 2.3]{GIS}).
\end{remark}

%%%%%%%%%%%%%%%%%%%%%%%%%%%%%%%%%%%%%%%%%%%%%
\subsection{Poincar\'e constant estimates}
In this section, we consider the estimates of the Poincar\'e constant defined in (\ref{PC}). Recall that $M=M_1 \# \cdots \# M_k$ 
is  a manifold with ends $M_1, \ldots , M_k$, where each end satisfies (VD) and (PI). Let $o \in K$ be a central reference point.
 Our main interest is to obtain the Poincar\'e constant $\Lambda(B(o,r))$ at the central point $o$. In fact, by the monotonicity 
of $\Lambda$ together with a Whitney covering argument (see \cite{GIS Poincare}), for $r>2|x|$
\begin{equation*}
\Lambda(B(x, r)) \simeq \Lambda (B(o, r)).
\end{equation*}

For $r>0$, let $n=n(r)$ be the number  so that 
\begin{align*}
V_n(r) &= \max_{i \neq m} V_i(r),
\end{align*}
where $m=m(r)$ is the number of the largest end (see (\ref{largest})).
Then we obtain the following. 
\begin{theorem}[Grigor'yan, Ishiwata, Saloff-Coste \cite{GIS Poincare}]
\label{theorem PC non-parab}
 Let $M=M_1 \# \!\cdots \!\# M_k$ be a manifold with $k$ non-parabolic ends. 
Assume that each end $M_i$ satisfies (VD) and  (PI). 
Then for sufficiently large $r>1$
\begin{equation*}
\Lambda(B(o,r)) \leq CV_n(r).
\end{equation*}
Moreover, if for all $r>0$
\begin{equation*}
rV^\prime (r) \leq C V(r) ,
\end{equation*}
 then for sufficiently large $r> 1$
\begin{equation*}
\Lambda(B(o,r)) \simeq V_n(r).
\end{equation*}
\end{theorem}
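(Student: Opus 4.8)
The plan is to exploit the off-diagonal structure of $M$ together with the known heat kernel estimate at the central point, reducing the Poincar\'e constant to a computation about functions that are locally constant on each end. First I would recall that on a non-parabolic manifold with ends, all $h_i$ are bounded, so Theorem \ref{off-diagonal} combined with the non-parabolic case estimate gives $p(t,o,o)\simeq 1/\min_i V_i(\sqrt t)$. The Poincar\'e constant $\Lambda(B(o,r))$ controls, and is controlled by, the relaxation behavior of the Neumann heat semigroup on $B(o,r)$; the slowest-decaying non-constant mode is the one that puts a different constant value on each end and interpolates across the bottleneck $K$. So the heuristic is that the extremal $f$ in \eqref{PC} behaves like a locally constant function away from $K$, and the Dirichlet energy is concentrated in a bounded neighborhood of $K$ where $|\nabla f|\simeq \mathrm{const}$, while $\int_{B(o,r)}|f-f_{B(o,r)}|^2\,d\mu$ is dominated by the largest two ends, i.e.\ by $V_n(r)$ once the contribution of the very largest end $M_m$ is absorbed into the mean $f_{B(o,r)}$.

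For the upper bound $\Lambda(B(o,r))\le CV_n(r)$, I would proceed as follows. On each end $E_i\cap B(o,r)$ the local Poincar\'e inequality $(PI)$ holds (with $(VD)$), so for any $f$ one may replace $f$ on $E_i$ by its average $a_i$ over $E_i\cap B(o,r)$ at the cost of $C\,r^2\int_{E_i}|\nabla f|^2\,d\mu$, using a chaining/telescoping argument over dyadic annuli inside the end (this is standard once $(PI)+(VD)$ hold on $M_i$, cf.\ \cite{SC LNS}). Reorder the ends so that $V_1(r)\ge V_2(r)\ge\cdots$; then $V_m(r)=V_1(r)$ and $V_n(r)=V_2(r)$. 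Subtract $a_m$ from $f$ (harmless, since $\Lambda$ is translation invariant) so that the new function is essentially $0$ on the largest end up to the energy error term. Then $\int_{B(o,r)}|f-f_{B(o,r)}|^2\,d\mu\le C\sum_{i\ne m}|a_i-a_m|^2 V_i(r)+\text{(error)}$, and the differences $|a_i-a_m|^2$ are each bounded by $C\int_{B(o,r)}|\nabla f|^2\,d\mu$ because $K\cup(\text{annulus of bounded radius})$ is connected and has bounded geometry, so a path from the $M_i$-side to the $M_m$-side through $K$ gives $|a_i-a_m|^2\le C\int|\nabla f|^2$ by Cauchy--Schwarz. Since $\sum_{i\ne m}V_i(r)\le (k-1)V_n(r)$, this yields $\Lambda(B(o,r))\le CV_n(r)$. (The $r^2$-type error terms are absorbed because $(VD)$ and non-parabolicity force $V_i(r)\ge c\,r^2$ up to constants only when the end is at least "$n=2$-dimensional"; more carefully, one notes the error contributes at most $Cr^2$ which is $\le CV_n(r)$ precisely when the second-largest end is non-parabolic — this is where the non-parabolicity of \emph{every} end is used.)

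For the matching lower bound under the extra hypothesis $rV'(r)\le CV(r)$, I would exhibit a good test function. Take $f$ equal to $c_i$ on $E_i\setminus B(o,1)$ for suitable constants, with $f$ linear in $d(o,\cdot)$ on the unit annulus near $K$ and cut off smoothly; choose the $c_i$ so that $f_{B(o,r)}=0$, which forces $f$ to be comparable to $1$ on the end $M_n$ and to a small constant on $M_m$ (the largest end's mass pins the average). Then $\int_{B(o,r)}|\nabla f|^2\,d\mu\simeq\text{const}$, because the gradient lives on a bounded region, while $\int_{B(o,r)}|f-f_{B(o,r)}|^2\,d\mu\gtrsim V_n(r)$ from the $M_n$ contribution. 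The regularity assumption $rV'(r)\le CV(r)$ is what guarantees that making $f$ locally constant outside the unit ball costs only $O(1)$ in energy rather than something growing in $r$ — it is the quantitative form of "the ends are not too thin," ensuring the chaining sum $\sum_j r_j^2/V(r_j)$ over dyadic scales $r_j$ converges and the ratio $V_i(R)/V_i(r)$ does not blow up faster than polynomially. I expect the main obstacle to be the rigorous control of the error terms in the upper bound, specifically verifying that the $r^2$-scale errors from the local Poincar\'e inequality on each end are genuinely dominated by $V_n(r)$ and not merely by $V_m(r)$; this requires using non-parabolicity of the second-largest end, and is the delicate point that distinguishes $V_n$ from the naive bound $V_m$. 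The lower bound is comparatively routine given a clean test function and the hypothesis $rV'(r)\le CV(r)$.
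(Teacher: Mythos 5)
The survey itself gives no proof of this theorem (it is quoted from \cite{GIS Poincare}), so your proposal can only be judged on its own terms, and there is a genuine gap at the key step of your upper bound. You replace $f$ on each truncated end by its average $a_i$ over $E_i\cap B(o,r)$ at cost $Cr^2\int|\nabla f|^2$ (fine, granting the standard technicalities of transferring (PI) from balls of $M_i$ to $E_i\cap B(o,r)$), but you then assert $|a_i-a_m|^2\leq C\int_{B(o,r)}|\nabla f|^2$ ``because a path through $K$ with bounded geometry gives this by Cauchy--Schwarz.'' That justification cannot be right: $a_i$ and $a_m$ are averages over regions of measure $\simeq V_i(r)$, not over a fixed neighborhood of $K$, and comparing them to unit-scale averages near $K$ requires chaining through dyadic annuli inside each end, whose total cost is exactly $h_i(r)=1+\int_1^r \frac{s\,ds}{V_i(s)}$. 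A quick sanity check shows your mechanism proves too much: nothing in your step uses non-parabolicity, so the same argument applied to $\mathbb{R}^2\#\mathbb{R}^2$ would give $\Lambda(B(o,r))\leq C\max(r^2,V_n(r))\simeq r^2$, contradicting the known answer $r^2\log r$ (Theorem \ref{theorem PC COE}); the extra factor $h_n(r)$ in the parabolic case is precisely the chaining cost your argument fails to see. The fix is to run the telescoping estimate $|a_{B_j}-a_{B_{j+1}}|^2\leq C\frac{r_j^2}{V_i(r_j)}\int|\nabla f|^2$ over dyadic scales and sum; boundedness of $h_i$ for non-parabolic ends (equivalent to non-parabolicity under (VD)+(PI)) is what makes the total constant $O(1)$. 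So the essential use of non-parabolicity is this boundedness of $h_i$, not merely the absorption $r^2\leq CV_n(r)$ that you single out (though that observation, via $r^2/V_i(r)\leq C\int_r^{2r}\frac{s\,ds}{V_i(s)}\to 0$, is correct and also needed).

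Your lower bound via a plateau test function (equal to $1$ on $E_n\cap B(o,r)$, $0$ on the other ends, transition in a fixed unit-scale region near $K$) is sound: the mean $f_{B(o,r)}$ stays bounded away from $1$ because $\mu(E_m\cap B(o,r))\gtrsim V_n(r)$, so the numerator is $\gtrsim V_n(r)$ while the energy is $O(1)$. Your explanation of the role of the hypothesis $rV'(r)\leq CV(r)$ is, however, hand-waving: it has nothing to do with convergence of a chaining sum; in the authors' framework it is the condition that lets one estimate the Dirichlet energy $\int \ell'(s)^2V_i'(s)\,ds$ of radial test profiles in terms of the volume functions, which is what their matching lower bounds are built on (and is genuinely needed in the general/parabolic version, Theorem \ref{theorem PC COE}).
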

When $M$ has at least one parabolic end, we assume the following additional condition (see \cite{GIS Poincare} for details).
\begin{definition}[(COE)]
\label{DefCOE}We say that a manifold $M=\#_{i\in I}M_{i}$ has \emph{%
critically ordered ends} and write (COE) in short  if there exist $\varepsilon ,\delta ,\gamma
_{1},\gamma _{2}>0$ such that 
\begin{equation*}
\gamma _{1}<\varepsilon ,~~\gamma _{1}+\gamma _{2}<\delta <2,~~2\gamma
_{1}+\gamma _{2}<2,  
%\label{gade}
\end{equation*}%
and a decomposition 
\begin{equation*}
I=I_{super}\sqcup I_{middle}\sqcup I_{sub}  
%\label{I=}
\end{equation*}%
such that the following conditions are satisfied:

\begin{itemize}
\item[$\left( a\right) $] For each $i\in I_{super}$ and all $r\geq 1$, 
\begin{equation*}
V_{i}(r)\geq cr^{2+\epsilon }\ .
\end{equation*}

\item[$\left( b\right) $] For each $i\in I_{sub}$, $V_{i}$ is subcritical (see Definition \ref{def sub}) 
and 
\begin{equation*}
V_{i}(r)\leq C r^{2-\delta }\ .  %\label{2-de}
\end{equation*}

\item[$\left( c\right) $] For each $i\in I_{middle}$, $V_i$ is regular (see (\ref{regular})). 
%, that is, 
%\begin{equation}
%\left( \frac{R}{r}\right) ^{2-\gamma _{2}}\lesssim \frac{V_{i}(R)}{V_{i}(r)}%
%\lesssim \left( \frac{R}{r}\right) ^{2+\gamma _{1}}~\mbox{for all }1\leq
%r\leq R.  \label{middle}
%\end{equation}%
Moreover, for any pair $i,j\in I_{middle}$ we have either $V_{i}\geq c
V_{j} $ or $V_{j}\geq  c V_{i}$ (i.e., the ends in $I_{middle}$ can be
ordered according to their volume growth uniformly over $r\in \lbrack
1,\infty )$) and $V_{i}\geq c V_{j}$ implies that $V_{i}h_{i}\geq c^\prime 
V_{j}h_{j}$.\ Besides, if $M$ is parabolic (i.e., all ends are parabolic)
then $V_{i}\geq C V_{j}$ also implies $V_{i}h_{i}^{2}\leq C^\prime  V_{j}h_{j}^{2}
$.
\end{itemize}
\end{definition}
\begin{theorem}[\cite{GIS Poincare}]\label{theorem PC COE}
Let $M=M_1 \# \cdots \# M_k$ be a manifold with $k$ ends, where each end satisfies (VD) and (PI). 
Suppose that there exists at least one parabolic end and each parabolic end satisfies (RCA). 
If $M$ admits (COE), then for sufficiently large $r>1$
\begin{equation*}
\Lambda(B(o,r)) \leq CV_n(r) h_n(r).
\end{equation*}
If, in addition, each $V_i$ satisfies that
\begin{equation*}
rV_i^\prime (r) \leq C V_i(r) ~~ (\forall r>1),
\end{equation*}
then, for sufficiently large $r> 1$
\begin{equation*}
\Lambda(B(o,r)) \simeq V_n(r) h_n(r).
\end{equation*}
\end{theorem}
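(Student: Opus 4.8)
The plan is to follow the same two-sided strategy that underlies Theorem \ref{theorem PC non-parab}, but now adapted to the parabolic situation where the relevant scale factor is $V_n(r)h_n(r)$ rather than $V_n(r)$. For the upper bound $\Lambda(B(o,r)) \leq C V_n(r) h_n(r)$, I would start from a function $f \in C^1(\overline{B(o,r)})$ with $f_{B(o,r)}$ normalized to $0$, and decompose $B(o,r)$ into the central compact piece $K$ (of bounded size) together with the truncated ends $B(o,r) \cap E_i$. On each end one can compare $f$ to its average $f_i$ over $B(o,r)\cap E_i$ using the end's own Poincar\'e inequality (which follows from (PI)+(VD) on $M_i$ together with (RCA) in the parabolic case, via the known annulus-type Poincar\'e estimates). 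The subtle part is controlling the differences $f_i - f_j$ of the end-averages: this is where the bottleneck enters and where the harmonic capacity / $h_i$ functions appear. Concretely, one tests the Dirichlet energy against the $h_i$-harmonic interpolating functions on each end (the functions behaving like $h_i(|x|)/h_i(r)$), so that $|f_i - f_j|^2$ is bounded by $C(h_i(r) + h_j(r))$ times the energy on the corresponding annular regions. Summing these contributions, using that $V_n(r)$ is the largest volume among the ends \emph{other than} the dominating end $M_m$, and invoking the ordering built into (COE) (parts $(a)$, $(b)$, $(c)$, and in particular $V_i \geq cV_j \Rightarrow V_i h_i^2 \leq C V_j h_j^2$ in the parabolic case), one sees that the worst term is of order $V_n(r)h_n(r)$ times $\int_{B(o,r)}|\nabla f|^2 d\mu$, which gives the upper bound.

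For the matching lower bound under the extra regularity $r V_i'(r) \leq C V_i(r)$, I would exhibit an explicit near-extremal test function. Take $f$ to be (a smoothing of) the function equal to a constant $a_i$ on the part of each end $E_i$ inside $B(o,r)$ and interpolating through $K$, with the constants $a_i$ chosen to have zero weighted average and to be "spread apart" between the end $M_m$ and the second-largest end $M_n$ — essentially $a_m \approx +1$, $a_n \approx -1$, after rescaling. More precisely, on each end one lets $f$ follow the profile $h_i(|x|)/h_i(r)$ so that the Dirichlet energy contributed by end $E_i$ is comparable to $1/h_i(r)$ (the reciprocal of the capacity between $K$ and the sphere of radius $r$ in $M_i$), while the $L^2$-oscillation picks up a term comparable to $V_n(r)$ from the large-volume end. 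Balancing, $\int |f - f_{B}|^2 \gtrsim V_n(r)$ and $\int|\nabla f|^2 \lesssim 1/h_n(r)$ (the dominant-end energy $1/h_m(r)$ being no larger, again by the (COE) ordering), giving $\Lambda(B(o,r)) \gtrsim V_n(r) h_n(r)$. The regularity hypothesis $rV_i'(r)\leq CV_i(r)$ is what guarantees these annular/capacitary estimates are sharp and that the smoothing of $f$ does not cost more than a constant factor in energy.

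The main obstacle, I expect, is the bookkeeping in the upper bound: one must show that among all the cross-terms $|f_i - f_j|^2$ weighted by the volumes of the truncated ends, none beats $V_n(r)h_n(r)$ — and this is exactly what the three-way split $I = I_{super}\sqcup I_{middle}\sqcup I_{sub}$ and the parameter constraints $\gamma_1 < \varepsilon$, $\gamma_1+\gamma_2 < \delta < 2$, $2\gamma_1+\gamma_2 < 2$ are engineered to handle. In particular, the subcritical ends are negligible because $V_i(r)\leq Cr^{2-\delta}$ forces $V_i h_i \leq C r^2 \cdot r^{-\delta}\cdot(\text{bounded})$ to be dominated by the middle/super ends, and within $I_{middle}$ the uniform ordering plus the implication $V_i h_i \geq c V_j h_j$ ensures the second-largest term is genuinely $V_n h_n$. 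I would handle this by a careful case analysis on which block $n(r)$ lies in, quoting the capacity estimates of \cite{G-SC Dirichlet} and \cite{G-SC hitting} for the $h_i$-profiles and the Whitney-covering reduction of \cite{GIS Poincare} to pass between $\Lambda(B(x,r))$ and $\Lambda(B(o,r))$; I would refer to \cite{GIS Poincare} for the remaining routine computations.
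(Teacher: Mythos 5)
First, note that this survey does not actually prove Theorem \ref{theorem PC COE}: it is quoted from \cite{GIS Poincare} (in preparation) with no argument given, so your proposal can only be measured against plausibility, not against a written proof. With that caveat, your upper-bound sketch has the right architecture (Poincar\'e inequality on each truncated end, control of the differences of end-averages by capacity-type quantities of size $h_i(r)$, summation weighted by end volumes, with $V_i(r)h_i(r)\ge c\,r^2$ absorbing the within-end terms); the key inequality $|f_i-f_j|^2\le C\left(h_i(r)+h_j(r)\right)\int_{B(o,r)}|\nabla f|^2d\mu$ is asserted rather than proved, but it is of the correct form and is the kind of statement the cited capacity/hitting estimates support. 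A small but telling slip there: for a parabolic subcritical end $h_i$ is \emph{not} bounded; rather $h_i(r)\simeq r^2/V_i(r)$, so $V_ih_i\simeq r^2$, and subcritical ends are negligible only in the sense $V_ih_i\le CV_jh_j$, not because $V_ih_i=o(r^2)$.

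The genuine gap is in the lower bound. You justify taking $h_i$-profiles on \emph{all} ends, including the dominant end $E_m$, by claiming that ``the dominant-end energy $1/h_m(r)$ is no larger'' than $1/h_n(r)$ ``by the (COE) ordering.'' The ordering gives exactly the opposite: in the parabolic case (COE) says $V_m\ge cV_n$ and $V_mh_m^2\le CV_nh_n^2$, hence $h_m\le Ch_n$ and $1/h_m\ge c/h_n$. So the energy of your test function is of order $1/h_m(r)$, and the ratio it produces is only $V_n(r)h_m(r)$, which can be far below $V_n(r)h_n(r)$. Concretely, take $M=\mathbb{R}^2\# M_2$ with $V_2(r)\simeq r$ (admissible under (COE), both ends parabolic): then $h_1\simeq\log r$, $h_2\simeq r$, the theorem (and the table at the end of Section 2.3) gives $\Lambda(B(o,r))\simeq r^2$, whereas your function yields only about $r\log r$; your first variant (constants on the ends, interpolation through $K$ only) gives merely $\gtrsim V_n(r)$, missing the factor $h_n(r)$ altogether. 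The repair is to concentrate all the variation on the \emph{second} end: take $f=0$ on $K$, on $E_m$ and on every end other than $E_n$, and $f=h_n(|x|)/h_n(r)$ (truncated at $1$) along $E_n\cap B(o,r)$. Then $rV_n'(r)\le CV_n(r)$ gives $\int|\nabla f|^2d\mu\le C/h_n(r)$, while $f\ge c>0$ on an annular region of $E_n$ of measure $\simeq V_n(r)$ and either $f_{B(o,r)}$ is small or the set $E_m\cap B(o,r)$ (where $f=0$, of measure $\ge cV_n(r)$) carries the oscillation, so $\int|f-f_{B(o,r)}|^2d\mu\ge cV_n(r)$ and $\Lambda(B(o,r))\ge cV_n(r)h_n(r)$. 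As stated, though, your lower-bound argument does not establish the theorem.
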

These results say that the Poincar\'e constant $\Lambda(B(o,r))$ is determined by the  \textbf{second largest end!}

As an explicit example, let $M=\mathbb{R}^n \# \mathbb{R}^n$ with $n\ge 2$. Then Theorems 
\ref{theorem PC non-parab} and \ref{theorem PC COE} imply that 
\begin{equation*}
\Lambda (B(o,r)) \simeq \left\{ 
\begin{array}{ll}
 r^n & n \geq 3, \\
r^2 \log r & n=2.
\end{array}
\right.
\end{equation*}

Let $M=M_1\# \cdots \#M_k$ be a manifold with ends. Assume that each end $M_i$ satisfies (VD), (PI) and that 
each parabolic end satisfies (RCA). 
Suppose that for $i=1, \ldots k$
\begin{equation*}
V_i(r) \simeq r^{\alpha_i} \left( \log r \right)^{\beta_i},
\end{equation*}
where $(\alpha_1, \beta_1 ) \geq (\alpha_{2}, \beta_{2}) \geq \cdots \geq (\alpha_k, \beta_k)$ as the lexicographical order. Then 
\begin{align*}
\Lambda(B(o,r)) &\simeq V_2(r)h_2(r) \\
& \simeq
\left\{
\begin{array}{ll}
r^{\alpha_2} (\log r)^{\beta_2}&\mbox{if }  (\alpha_2, \beta_2)>(2,1) ,\\
r^2\log r (\log \log r )^2 &\mbox{if } (\alpha_2, \beta_2)=(2,1), \\
r^2\log r & \mbox{if } (2, -\infty)< (\alpha_2, \beta_2)<(2, 1),\\
r^2 &\mbox{if } (\alpha_2, \beta_2)<(2, -\infty).
\end{array}
\right.
\end{align*}

%%%%%%%%%%%%%%%%%%%%%%%%%%%%%%%%%%%%%%%%%%%%%%%%%%%%%%%%%%%%%%
\section{Manifold with ends with oscillating volume functions}
\label{sec osci}
\subsection{Preliminaries}
The purpose of this section is to construct manifolds with ends for which the estimate in (\ref{min min}) 
might not give an optimal bound. To obtain such a manifold, we need a manifold with (VD) and (PI) 
together with oscillating volume function.
First, let us recall the following theorem.
\begin{theorem}[Grigor'yan and Saloff-Coste \mbox{\cite[Theorem 5.7]{G-SC stability}}]
\label{W}
Let $(M, \mu)$ be a complete non-compact wighted manifold with (PHI) and (RCA) at a reference point $o \in M$. 
If a positive valued smooth function 
$W : [0, \infty)\rightarrow \mathbb{R}$ satisfies for all $r>0$
\begin{align}
\sup_{ [r, 2r] } W & \leq C \inf_{ [r,2r] } W,  \label{h1}\\
\int_0^r W^2(s)s ds & \leq C W^2 (r) r^2, \label{h2}
\end{align}
then the weighted manifold $(M, W^2(d(o, \cdot) ) \mu)$ also satisfies (PHI).
\end{theorem}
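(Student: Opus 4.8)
The plan is to derive (PHI) for the new weighted manifold $\widehat{M}:=(M,\widehat{\mu})$, where $\widehat{\mu}:=W^{2}(d(o,\cdot))\,\mu$, from the characterization of (PHI) in Theorem~\ref{TVD+PI}. Since the Riemannian metric is not changed, $\widehat{M}$ is still geodesically complete and non-compact and has the same geodesic balls $B(x,r)$ as $M$; hence it suffices to check that $\widehat{M}$ satisfies (VD) and (PI), knowing that $(M,\mu)$ does (which is equivalent to its (PHI)). Write $d(x):=d(o,x)$ and $\widehat{V}(x,r):=\widehat{\mu}(B(x,r))$. Two elementary facts will be used repeatedly. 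First, iterating \eqref{h1} over consecutive dyadic intervals shows that $W$ is comparable to a constant on each annulus $\{\tfrac{1}{2}s\le d(\cdot)\le 2s\}$; in particular, on a \emph{remote} ball, that is, $B(x,r)$ with $r\le\tfrac{1}{4}d(x)$, one has $W^{2}(d(y))\simeq W^{2}(d(x))$ for every $y\in B(x,r)$, and therefore $\widehat{V}(x,r)\simeq W^{2}(d(x))\,V(x,r)$. Second, being connected, unbounded and volume doubling, $(M,\mu)$ satisfies reverse volume doubling, so $\mu\bigl(B(o,R)\setminus B(o,\tfrac{1}{2}R)\bigr)\simeq V(o,R)$.

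\emph{Step 1: (VD) for $\widehat{M}$.} For remote balls this follows at once from the first fact and (VD) for $\mu$: $\widehat{V}(x,2r)\simeq W^{2}(d(x))V(x,2r)\le C\,W^{2}(d(x))V(x,r)\simeq C\,\widehat{V}(x,r)$. For a \emph{central} ball $B(x,r)$, i.e.\ $r>\tfrac{1}{4}d(x)$, one reduces, using $B(x,r)\subseteq B(o,5r)$ together with a comparison of $\widehat{V}(x,r)$ with $\widehat{V}(o,r)$, to the doubling of the profile $R\mapsto\widehat{V}(o,R)$. This profile is estimated shell by shell: by \eqref{h1} and reverse doubling the dyadic shell at scale $2^{j}$ contributes $\simeq W^{2}(2^{j})V(o,2^{j})$ to $\widehat{V}(o,R)$, so the outermost shell already gives $\widehat{V}(o,R)\gtrsim W^{2}(R)V(o,R)$; and --- this is the point of \eqref{h2} --- the bound $\int_{0}^{R}W^{2}(s)s\,ds\le C\,W^{2}(R)R^{2}$, combined with the volume growth of $M$, controls the accumulation of $\widehat{\mu}$ toward $o$ and yields the matching estimate $\widehat{V}(o,R)\lesssim W^{2}(R)V(o,R)$. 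Thus $\widehat{V}(o,R)\simeq W^{2}(R)V(o,R)$, which is doubling by \eqref{h1} and (VD) for $\mu$.

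\emph{Step 2: (PI) for $\widehat{M}$.} This is the core of the proof; the natural approach is a Whitney-type covering and chaining argument. One covers $B(x,r)$ by a family of sub-balls with bounded overlap, each of which is either (i) remote, where $W^{2}(d(\cdot))$ is essentially constant so that the scale-invariant $\mu$-Poincar\'e inequality on the sub-ball transfers to $\widehat{\mu}$ up to a fixed multiplicative constant, or (ii) anchored near $o$; one then estimates the global $\widehat{\mu}$-variance of a test function over $B(x,r)$ by the sum of its local variances along chains of overlapping sub-balls connecting any two of them. The existence of such chains, of length bounded independently of scale, is exactly what (RCA) provides, and the doubling obtained in Step 1 controls the combinatorial factors. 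For an anchored sub-ball one reduces, using that $W$ depends only on $d(o,\cdot)$ together with a spherical (co-area) decomposition around $o$, to a one-dimensional weighted Poincar\'e inequality on an interval $[0,\rho]$ with weight of the form $W^{2}(s)\,dV(o,s)$, and the validity of that inequality with constant $\simeq\rho^{2}$ is once again precisely the content of \eqref{h2}.

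Putting Steps 1 and 2 together, $\widehat{M}$ satisfies (VD) and (PI), hence (PHI) by Theorem~\ref{TVD+PI}, which proves the theorem. The main obstacle is Step 2, and within it two points: organizing the Whitney chaining so that (RCA) genuinely governs the passage from the local Poincar\'e inequalities (valid on remote sub-balls, where $\widehat{\mu}$ and $\mu$ differ by a local constant) to a global one on $B(x,r)$; and handling the sub-balls anchored at the pole $o$, where one must recognize \eqref{h2} as exactly the condition permitting the $\mu$-Poincar\'e inequality to be reweighted by $W^{2}(d(o,\cdot))$ without degrading the scaling factor. By contrast Step 1 is routine once the consequences of \eqref{h1} and of reverse doubling have been recorded.
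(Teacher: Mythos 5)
You should first note that the paper contains no proof of Theorem \ref{W}: it is quoted as a black box from \cite[Theorem 5.7]{G-SC stability} and only used in Section \ref{sec osci}, so the comparison has to be with the proof in that reference. Your roadmap does match it in outline: deduce (PHI) for $(M,W^2(d(o,\cdot))\mu)$ from Theorem \ref{TVD+PI} by verifying (VD) and (PI) for the new measure, handle remote balls trivially because \eqref{h1} makes $W$ essentially constant there, and treat the region near $o$ by a chaining argument in which (RCA) provides the connectivity and \eqref{h2} controls the weighted mass accumulated at the center. This is indeed how the cited result is organized (PHI is reduced to: remote (VD)+(PI), plus (RCA), plus a volume comparison condition at the reference point).

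The gap is that Step 2, which is the entire content of the theorem, is only announced, and the one concrete mechanism you propose for it would fail. For an anchored ball you claim to reduce, ``using that $W$ depends only on $d(o,\cdot)$ together with a spherical (co-area) decomposition,'' to a one-dimensional weighted Poincar\'e inequality on $[0,\rho]$ with weight $W^2(s)\,dV(o,s)$. A Poincar\'e inequality for arbitrary, non-radial test functions on a ball around $o$ cannot be extracted from a one-dimensional inequality for radial profiles: the co-area decomposition controls neither the angular variation of $f$ nor the geometry of the spheres $\{d(o,\cdot)=s\}$, which is exactly where (RCA) must re-enter. The actual argument never uses a Poincar\'e inequality on a ball centered at $o$: one decomposes $B(o,r)$ into dyadic annuli, each of which is remote at its own scale and, thanks to (RCA), connected at that scale, so the unweighted Poincar\'e inequality transfers to $\widehat{\mu}$ on each annulus; one then telescopes the annular means toward the boundary, and \eqref{h2} enters precisely to show that the inner annuli carry so little $\widehat{\mu}$-mass, namely $\widehat{\mu}(B(o,r))\leq CW^2(r)\mu(B(o,r))$, that the telescoping sum still scales like $r^2$. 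Relatedly, in Step 1 your bound $\widehat{V}(o,R)\lesssim W^2(R)V(o,R)$ follows from \eqref{h2} as written only when $\mu(B(o,s))\simeq s^2$ (the two-dimensional situation the paper applies it to); in general the hypothesis one needs, and the one the cited theorem really encodes, is the weighted-volume comparison $\int_{B(o,r)}W^2(d(o,\cdot))\,d\mu\leq CW^2(r)\mu(B(o,r))$. So the plan points in the right direction, but the decisive step is neither carried out nor correctly specified.
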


Let $(M_1, \mu_1)$ be the $2$-dimensional Euclidean space $\mathbb{R}^2$ with the Euclidean measure. 
We denote by $(M_2, \mu_2)$ a weighted manifold $(\mathbb{R}^2, W^2(d(o,\cdot)) \mu_1)$, where the positive valued 
smooth function $W:[0, \infty) \rightarrow \mathbb{R}$ 
is defined as follows. For $\alpha>2$, $0<\beta< 2$ define a function $W$ so that for all $k \in \mathbb{N}$
\begin{equation}
\int_0^r W^2 (s) s ds =\left\{
\begin{array}{ll}
r^2 & 0<r<a_1, a_k \leq r < b_k, \\
\left( \frac{r}{b_k} \right)^{\alpha} b_k^2 & b_k \leq r <c_k, \\
r^2 \log r & c_k \leq r < d_k ,\\
\left( \frac{r}{d_k} \right)^{\beta} d_k^2 \log d_k & d_k \leq r <a_{k+1},
\end{array} 
\right. 
\label{def W}
\end{equation}
where the sequences $a_k \leq b_k < c_k \leq d_k < a_{k+1}$ satisfy $a_1> e$ and 
\begin{align}
b_k &= \frac{c_k}{ (\log c_k)^{\frac{1}{\alpha-2}}} ,  \label{bc} \\
a_{k+1} &= d_k (\log d_k)^{\frac{1}{2-\beta} } \label{ad}
\end{align}
 (see fig. \ref{figure: osc1}).
 These sequences  will be fixed later.
\begin{figure}[ht]
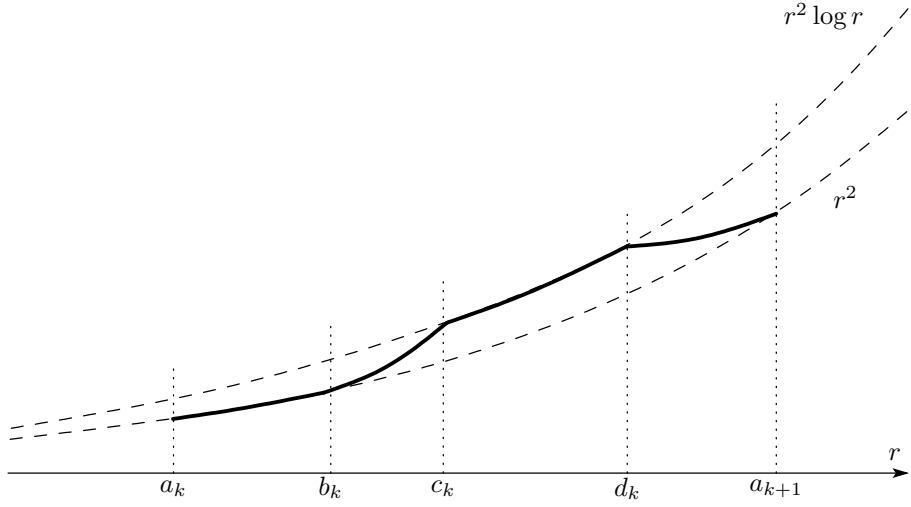

%\scalebox{0.57}{
\input osc1.tex
%\includegraphics{osc1.eps} 
%}
\caption{Oscillating volume function (thick line)}
\label{figure: osc1}
\end{figure}
Then the function $W$ satisfies that 
\begin{align*}
W(r)\simeq\left\{
\begin{array}{ll}
1 & 0<r<a_1,  a_k \leq r < b_k, \\
\left( \frac{r}{b_k} \right)^{\frac{\alpha-2}{2}} & b_k \leq r <c_k, \\
\sqrt{ \log r } & c_k \leq r < d_k, \\
\sqrt{ \log d_k } \left( \frac{r}{d_k} \right)^{-\frac{2-\beta }{2}} & d_k \leq r < a_{k+1},
\end{array}
\right.
\end{align*}
\begin{align*}
W^\prime (r)\simeq\left\{
\begin{array}{ll}
0 & 0<r<a_1, a_k < r < b_k, \\
r^{\frac{\alpha-4}{2}}b_k^{-\frac{\alpha-2}{2}} & b_k < r <c_k, \\
\frac{1}{\sqrt{ \log r }} \frac{1}{r} & c_k < r < d_k, \\
-r^{-\frac{4-\beta}{2}} \sqrt{ \log d_k } d_k^{\frac{2-\beta}{2}} & d_k < r < a_{k+1}.
\end{array}
\right.
\end{align*}

Since
\begin{align}
\frac{W^\prime (r)}{W(r)} \simeq\left\{
\begin{array}{ll}
0 & 0<r<a_1, a_k < r < b_k, \\
\frac{1}{r} & b_k < r <c_k, \\
\frac{1}{r\log r } & c_k < r < d_k, \\
-\frac{1}{r} & d_k < r < a_{k+1},
\end{array}
\right.
\label{ratio}
\end{align}
the condition in (\ref{h1}) holds. Indeed, the estimates in (\ref{ratio}) imply that
 for any $0<r_1<r_2$
\begin{equation*}
-C\log \frac{r_2}{r_1}=-\int_{r_1}^{r_2} \frac{Cds}{s}  \leq \int_{r_1}^{r_2} \frac{W^\prime(s)}{W(s)} ds 
=\log \frac{W(r_2)}{W(r_1)}
\leq \int_{r_1}^{r_2} \frac{C ds}{s}=C\log \frac{r_2}{r_1}.
\end{equation*}
Then we obtain for any $0<r_1 \leq r_2 \leq 2r_1$
\begin{equation*}
 W(r_1) \simeq W(r_2). 
\end{equation*}
Taking $r_1\leq r_2, r_3 \leq 2r_1$ so that 
\begin{equation*}
W(r_2)=\sup_{[r_1, 2r_1]} W \mbox{ and } W(r_3)=\inf_{[r_1, 2r_1]}W, 
\end{equation*}
we conclude the condition in (\ref{h1}).

Moreover, we see that 
\begin{align*}
W^2 (r) r^2 \simeq\left\{
\begin{array}{ll}
r^2 & 0<r<a_1, a_k \leq r < b_k, \\
r^{\alpha} b_k^{2-\alpha} & b_k \leq r <c_k, \\
r^2 \log r  & c_k \leq r < d_k, \\
r^{\beta} d_k^{2-\beta} \log d_k & d_k \leq r < a_{k+1},
\end{array}
\right.
\end{align*}
which satisfies the condition in (\ref{h2}). Applying Theorem \ref{W}, the weighted manifold 
$(M_2,\mu_2)=(\mathbb{R}^2, W^2(d(o, \cdot)) \mu_1)$ 
satisfies (PHI)
\footnote{
We need a smooth modification of the function $W$ satisfying (\ref{def W}) to apply Theorem \ref{W}. 
However, we omit the smoothing argument for simplicity.}
. 

Let $Z: [0, \infty) \rightarrow \mathbb{R}$ be a positive valued smooth function satisfying 
\begin{equation*}
Z(r)=\sqrt{1+2\log r} \quad (r\geq 1).
\end{equation*}
Let $(M_3, \mu_3)$ be a weighted manifold $(\mathbb{R}^2, Z^2(d(o, \cdot)) \mu_1)$.
Then  Theorem \ref{W} implies also that $(M_3, \mu_3)$ satisfies (PHI).

For $i=1,2,3$, we denote by $V_i(r)$ the volume function on $(M_i, \mu_i)$ at $o \in M_i=\mathbb{R}^2$. 
Then  we obtain 
\begin{align*}
V_1 (r) &=\pi r^2,\\
V_2(r)&=2\pi \int_0^r W^2(s) sds ,\\
V_3(r)&=2\pi \int_0^1 Z^2(s)sds+ 2\pi r^2 \log r ~~(r \geq 1).
\end{align*}
It is  easy to obtain for sufficiently large $r>1$
\begin{equation*}
h_1(r)\simeq \log r ~\mbox{ and }~ h_3(r)\simeq \log \log r.
\end{equation*}

Now we estimate the function $h_2(r)$. Observe that
\begin{align*}
\int_{a_k}^{b_k} \frac{sds}{V_2(s)} & =\log \frac{b_k}{a_k}, \\
\int_{b_k}^{c_k} \frac{sds}{V_2(s)} &= \int_{b_k}^{c_k} b_k^{\alpha-2} s^{1-\alpha} ds 
= \frac{1}{\alpha-2} \left( 1- \frac{1}{\log c_k} \right)  \simeq 1,  \\
\int_{c_k}^{d_k} \frac{sds}{V_2(s)} &= \int_{c_k}^{d_k} \frac{ds}{s \log s} =\log \left( \frac{ \log d_k}{\log c_k} \right), \\
\int_{d_k}^{a_{k+1}} \frac{sds}{V_2(s)} &= \int_{d_k}^{a_{k+1}} \frac{s^{1-\beta}ds}{d_k^{2-\beta} \log d_k} = \frac{1}{(2-\beta) } \left( 1- \frac{1}{ \log d_k} \right) \simeq 1.
\end{align*}
Then we obtain
\begin{equation}
h_2(a_{n}) =1+\int_{1}^{a_1} \frac{sds}{V_2(s)} +\sum_{k=1}^{n-1} \int_{a_k}^{a_{k+1}} \frac{sds}{V_2(s)} \simeq \sum_{k=1}^n 
\left( \log \frac{b_k}{a_k} + \log \frac{\log d_k}{\log c_k} +1 \right),
\label{estimate h2}
\end{equation}
which shows that the behavior of $h_2(r)$ depends on the choice of sequences $a_k\leq b_k< c_k \leq d_k<a_{k+1}$
satisfying (\ref{bc}) and (\ref{ad}).

\subsection{Example 1}
For the first case, let us choose sequences $a_k \leq b_k < c_k \leq d_k<a_{k+1}$ so that for any $k\in \mathbb{N}$
\begin{equation}
a_k \simeq  b_k, \quad c_k \simeq d_k.
\label{case 1}
\end{equation}
In this case, we obtain the following.
\begin{lemma}\label{lemma h2}
If $a_1$ is large enough and the sequences $a_k\leq b_k< c_k  \leq d_k <a_{k+1} $ 
satisfy  (\ref{bc}), (\ref{ad}) and (\ref{case 1}), then for sufficiently large $r>1$, 
\begin{equation*}
h_2(r) \simeq \frac{ \log r}{\log \log r}.
\end{equation*}
\end{lemma}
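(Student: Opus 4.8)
The plan is to pass from $h_2$ evaluated along the sequence $(a_k)$ to arbitrary large $r$, reducing the whole statement to a count of terms in (\ref{estimate h2}) together with the asymptotics of the recursion satisfied by $(a_k)$. The first step is to read off from (\ref{case 1}) that $h_2(a_n)\simeq n$. Since $a_k\le b_k$ and $c_k\le d_k$, both $\log(b_k/a_k)$ and $\log(\log d_k/\log c_k)$ are nonnegative; and (\ref{case 1}) provides $b_k\le C a_k$, $d_k\le C c_k$ with $C$ independent of $k$, so (using $c_k>a_1>e$) each of these two quantities is bounded above by a constant. Hence every summand in (\ref{estimate h2}) lies in $[1,C']$, and therefore $h_2(a_n)\simeq n$.

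Next I would turn (\ref{bc}), (\ref{ad}) and (\ref{case 1}) into a recursion for $\log a_k$. From $b_k=c_k(\log c_k)^{-1/(\alpha-2)}$ together with $a_k\simeq b_k$ one gets first $\log c_k\simeq\log a_k$ and then $c_k\simeq a_k(\log a_k)^{1/(\alpha-2)}$; combining with $d_k\simeq c_k$ and (\ref{ad}) yields
\[
a_{k+1}\simeq a_k(\log a_k)^{\theta},\qquad \theta:=\frac{1}{\alpha-2}+\frac{1}{2-\beta}>0 .
\]
Since $a_{k+1}\ge a_k(\log a_k)^{1/(2-\beta)}$ and $a_1>e$, the sequence $(a_k)$ is strictly increasing with $a_k\to\infty$. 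Writing $L_k:=\log a_k$, the relation above becomes $L_{k+1}-L_k=\theta\log L_k+O(1)$, so $L_k\to\infty$ and, for all large $k$,
\[
c_1\log L_k\ \le\ L_{k+1}-L_k\ \le\ c_2\log L_k .
\]

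The core of the argument is to deduce $L_n\simeq n\log n$ from this, and I would do it by a two-sided bootstrap. The lower bound $L_{k+1}-L_k\ge c_1\log L_{k_0}>0$ first gives $L_n\gtrsim n$, whence $\log L_k\gtrsim\log k$ and, summing, $L_n\gtrsim\sum_{k\le n}\log k\simeq n\log n$. For the upper bound, $\log L_k\le L_k$ forces at most geometric growth, so $\log L_n\lesssim n$; feeding this back gives $L_{k+1}-L_k\lesssim k$, hence $L_n\lesssim n^2$, hence $\log L_n\lesssim\log n$, hence $L_{k+1}-L_k\lesssim\log k$, and finally $L_n\lesssim\sum_{k\le n}\log k\lesssim n\log n$. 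Thus $L_n\simeq n\log n$, equivalently $n\simeq L_n/\log L_n=\log a_n/\log\log a_n$. This bootstrap is the only genuinely non‑routine point: the recursion $L_{k+1}-L_k\simeq\log L_k$ mimics the ODE $L'=\log L$ (whose solution grows like the inverse of the logarithmic integral, i.e.\ like $n\log n$), and a single naive summation is circular, so the crude power/exponential estimate followed by refinement is what makes it work.

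Finally I would remove the restriction $r=a_n$. The function $h_2$ is nondecreasing on $[1,\infty)$, so for large $r$, choosing $n$ with $a_n\le r<a_{n+1}$ and using $h_2(a_{n+1})\simeq n+1\simeq n$ together with the first step, we get $h_2(r)\simeq n$. Moreover $L_{n+1}=L_n+O(\log L_n)$ shows $\log r\simeq\log a_n$ and $\log\log r\simeq\log\log a_n$, so combining with $n\simeq\log a_n/\log\log a_n$ we conclude $h_2(r)\simeq\log r/\log\log r$, which is the assertion.
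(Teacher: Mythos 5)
Your proposal is correct and follows essentially the same route as the paper: establish $h_2(a_n)\simeq n$ from (\ref{estimate h2}), derive the recursion $a_{k+1}\simeq a_k(\log a_k)^{\gamma}$ with $\gamma=\frac{1}{\alpha-2}+\frac{1}{2-\beta}$ from (\ref{bc}), (\ref{ad}), (\ref{case 1}), and conclude $\log a_n\simeq n\log n$, hence $n\simeq \log a_n/\log\log a_n$. The only differences are cosmetic: your bootstrap on $L_k=\log a_k$ replaces the paper's direct assertion $c\,n^{\gamma n}\leq a_n\leq C n^{\gamma^\prime n}$, and you make explicit the monotone interpolation from $r=a_n$ to general $r$, which the paper leaves implicit.
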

\begin{proof}
By the estimate in (\ref{estimate h2}), we obtain
\begin{equation}
h_2(a_n) \simeq n.
\label{ann}
\end{equation}
Let us consider the behavior $a_n$. 
By the assumption in (\ref{bc}), we always have
\begin{equation}
c_k \simeq b_k \left( \log b_k \right)^{\frac{1}{\alpha-2}}. \label{bkck}
\end{equation}
Assumptions in (\ref{bc}) and  (\ref{ad}) imply that for any $k\in \mathbb{N}$
\begin{align*}
a_{k+1} & = d_k \left( \log d_k \right)^{\frac{1}{2-\beta}} \simeq c_k \left( \log c_k \right)^{\frac{1}{2-\beta}}\\
& \simeq b_k   \left( \log b_k \right)^{\frac{1}{\alpha-2}} 
\left( \log \left( b_k \left( \log b_k \right)^{\frac{1}{\alpha-2}} \right) \right)^{\frac{1}{2-\beta}}\\
&=b_k  \left( \log b_k \right)^{\frac{1}{\alpha-2}} 
\left( \log b_k + \frac{1}{\alpha-2} \log \log b_k \right)^{\frac{1}{2-\beta}} \\
&\simeq b_k \left( \log b_k  \right)^{\frac{1}{\alpha-2} + \frac{1}{2-\beta}} 
 = b_k \left( \log b_k \right)^{\gamma} \simeq a_k \left( \log a_k \right)^{\gamma},
\end{align*}
where $\gamma=\frac{1}{\alpha-2} +\frac{1}{2-\beta}$. Taking $a_1$ large enough, 
there exist positive constants $c, C$ and $\gamma^\prime >\gamma$ such that for any $n \in \mathbb{N}$
\begin{equation*}
c n^{\gamma n}  \leq a_n \leq Cn^{\gamma^\prime n}.
\end{equation*}
This implies that  for sufficiently large $n \in \mathbb{N}$
\begin{align*}
\log a_n \simeq n \log n,\quad 
\log \log a_n \simeq  \log n.
\end{align*}
Then we obtain for sufficiently large $n \in \mathbb{N}$,
\begin{equation*} 
 \frac{\log a_n}{\log \log a_n}\simeq n \simeq h_2(a_n),
\end{equation*}
which concludes the lemma.
%Indeed, put $r=n^{\gamma n}$. Then we obtain
%\begin{equation*}
%n \simeq \frac{ \log r}{\log \log r}.
%\end{equation*}
%Then (\ref{nr}) implies (\ref{rr}). 
\end{proof}

Now we estimate the heat kernel on $M=M_1\# M_2$. 
By the definition of $V_1(r)$, $V_2(r)$ and by Lemma \ref{lemma h2}, we obtain for sufficiently large $r>1$
\begin{align*}
V_1(r)&=r^2,  &&V_2(r)\simeq\left\{ 
\begin{array}{ll}
r^2 & a_k \leq r <b_k \\
r^2 \log r & c_k \leq r <d_k,
\end{array}
\right. \\
h_1(r) &\simeq \log r,  && h_2(r) \simeq \frac{\log r}{\log \log r} ,\\
V_1(r)h_1^2(r)&\simeq  r^2(\log r)^2,  &&V_2(r)h_2^2(r) \simeq \left\{ 
\begin{array}{ll}
r^2 \frac{ (\log r )^2}{(\log \log r )^2} & a_k \leq r < b_k \\
r^2 \frac{(\log r)^3}{(\log \log r)^2} & c_k \leq r <d_k.
\end{array}
\right.
\end{align*}
According to the heat kernel upper estimate in (\ref{min min}), we obtain  for $c_k \leq \sqrt{t} < d_k$
\begin{equation*}
p(t, o, o) \leq \frac{ \min_i  h_i^2 (\sqrt{t})}{ \min_i V_i(\sqrt{t}) h_i^2 (\sqrt{t})} 
\simeq \frac{ \left( \frac{ \log \sqrt{t} }{\log \log \sqrt{t}} \right)^2}{ t \left( \log \sqrt{t} \right)^2 }
\simeq \frac{1}{ t (\log \log t )^2}.
\end{equation*}
Since $V(r) \simeq \max_i V_i(r) =r ^2\log r$ in this interval, the above upper estimate 
is much larger than 
\begin{equation*}
\frac{1}{V(\sqrt{t})}\simeq \frac{1}{t \log t},
\end{equation*}
which makes difficult to obtain matching lower bound by using \cite[Theorem 7.2]{Coulhon-Grigoryan}.

%%%%%%%%%%%%%%%%%%%%%%%%%%%%%%%%%%
\subsection{Example 2}
Next, let us choose sequences $a_k \leq b_k< c_k \leq d_k< a_{k+1}$ so that for some $\delta>1$ and for any $k \in \mathbb{N}$
\begin{equation}
a_k\simeq b_k, \quad c_k^{\delta} \simeq d_k. \label{case 2}
\end{equation}
In this case, we obtain the following.
\begin{lemma}\label{lemma case 2}
If $a_1$ is large enough and the sequences $a_k \leq b_k<c_k \leq d_k<a_{k+1}$ 
satisfy (\ref{bc}),  (\ref{ad}) and (\ref{case 2}), then for sufficiently large $r>1$, 
\begin{equation*}
h_2(r) \simeq \log \log r.
\end{equation*}
\end{lemma}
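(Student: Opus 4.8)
The plan is to repeat the computation in the proof of Lemma \ref{lemma h2}, now using the hypothesis $c_k^\delta \simeq d_k$ in place of $c_k \simeq d_k$, and track how this changes both the local increments of $h_2$ and the growth of the sequence $a_n$. The starting point is formula (\ref{estimate h2}): since $a_k \simeq b_k$, the term $\log(b_k/a_k)$ is bounded, so
\begin{equation*}
h_2(a_n) \simeq \sum_{k=1}^n \left( \log \frac{\log d_k}{\log c_k} + 1 \right).
\end{equation*}
With $d_k \simeq c_k^\delta$ we get $\log d_k \simeq \delta \log c_k$, hence $\log(\log d_k/\log c_k) \simeq \log \delta \simeq 1$, so each summand is again $\simeq 1$ and $h_2(a_n) \simeq n$. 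So far this looks identical to Example 1; the difference will only surface through the growth rate of $a_n$.

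**Next I would** analyze the recursion for $a_n$. From (\ref{bc}), $c_k \simeq b_k (\log b_k)^{1/(\alpha-2)} \simeq b_k$ up to a logarithmic factor; then $d_k \simeq c_k^\delta$ gives $d_k$ growing like a power of $b_k$, and (\ref{ad}) gives $a_{k+1} = d_k (\log d_k)^{1/(2-\beta)} \simeq d_k$ up to a log factor. Chaining these, $a_{k+1} \simeq b_k^\delta (\log b_k)^{\theta} \simeq a_k^\delta (\log a_k)^\theta$ for some exponent $\theta>0$. Because $\delta>1$, the power term dominates and the logarithmic correction is negligible: taking logarithms, $\log a_{k+1} \simeq \delta \log a_k$, so $\log a_n \simeq \delta^n$ (for $a_1$ large enough, with matching upper and lower constants absorbing the log corrections, exactly as the inequality $cn^{\gamma n} \le a_n \le C n^{\gamma' n}$ was produced in Lemma \ref{lemma h2}). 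Consequently $\log\log a_n \simeq n \log \delta \simeq n \simeq h_2(a_n)$, which gives $h_2(r) \simeq \log\log r$ along the sequence $r = a_n$.

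**Finally**, to pass from the values $r = a_n$ to all sufficiently large $r$, I would invoke monotonicity of $h_2$ (it is non-decreasing by its definition via the integral of $sds/V_2(s)$) together with the fact that consecutive scales are comparable up to a bounded power of a logarithm, so that $h_2(a_{n+1}) \simeq h_2(a_n)$: for $a_n \le r \le a_{n+1}$ we have $h_2(a_n) \le h_2(r) \le h_2(a_{n+1}) \simeq h_2(a_n)$, while simultaneously $\log\log r$ is squeezed between $\log\log a_n$ and $\log\log a_{n+1} \simeq \log\log a_n$. Combining the two sandwiches yields $h_2(r) \simeq \log\log r$ for all large $r$, completing the proof.

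**The main obstacle** I anticipate is the bookkeeping that shows the logarithmic corrections in the recursion $a_{k+1} \simeq a_k^\delta (\log a_k)^\theta$ genuinely do not affect the asymptotic $\log a_n \simeq \delta^n$; one must check that iterating the map $x \mapsto x^\delta (\log x)^\theta$ from a large enough starting value keeps $\log x_{n+1} / \log x_n$ pinned between two constants arbitrarily close to $\delta$, so that $\log\log a_n = n\log\delta + O(1)$. This is the same mechanism used in Lemma \ref{lemma h2} (where the base of the iteration was $x\mapsto x(\log x)^\gamma$ and one only got $\log a_n \simeq n\log n$), but here the super-linear exponent $\delta>1$ makes the estimate more robust rather than less, so the argument should go through cleanly once the constants are chosen in the right order ($a_1$ last, depending on $\delta,\alpha,\beta$).
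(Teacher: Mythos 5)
Your proposal is correct and follows essentially the same route as the paper: extract $h_2(a_n)\simeq n$ from (\ref{estimate h2}), derive the recursion $a_{k+1}\simeq a_k^{\delta}(\log a_k)^{\theta}$ with $\theta=\frac{\delta}{\alpha-2}+\frac{1}{2-\beta}$ from (\ref{bc}), (\ref{ad}), (\ref{case 2}), and deduce $\log\log a_n\simeq n$ for $a_1$ large (the paper does this via the two-sided bound $c\,a_k^{\delta}\le a_{k+1}\le C\,a_k^{\eta}$ with some $\eta>\delta$, which is exactly your ``pinning'' of $\log a_{k+1}/\log a_k$ between $\delta$ and a slightly larger constant). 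Your extra interpolation step from $r=a_n$ to general $r$ via monotonicity of $h_2$ is a harmless addition that the paper leaves implicit.
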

\begin{proof}
The estimate in (\ref{estimate h2}) yields that 
\begin{equation*}
h_2(a_n) \simeq n.
\end{equation*}
By the estimates in (\ref{bkck}) and (\ref{case 2}), we obtain
\begin{align*}
d_k &\simeq c_k^{\delta}
\simeq  \left( b_k \left( \log b_k \right)^{\frac{1}{\alpha-2}} \right)^{\delta} 
\simeq a_k^{\delta} \left( \log a_k \right)^{\frac{\delta}{\alpha-2}}.
\end{align*}
Assumptions in (\ref{bc}) and (\ref{ad}) imply that
\begin{align*}
a_{k+1} &= d_k \left( \log d_k \right)^{\frac{1}{2-\beta}} \\
&\simeq a_k^{\delta} \left( \log a_k \right)^{\frac{\delta}{\alpha-2}} 
\left( \delta \log a_k +\frac{\delta}{\alpha -2} \log\log a_k \right)^{\frac{1}{2-\beta}}\\
&\simeq a_k^{\delta} \left( \log a_k \right)^{\theta},
\end{align*}
where $\theta =\frac{\delta}{\alpha-2} + \frac{1}{2-\beta}$. Hence, there exist positive constants $c, C$ 
and $\eta > \delta$ such that for any $k \in \mathbb{N}$
\begin{equation*}
ca_k^{\delta} \leq a_{k+1} \leq Ca_k^{\eta}.
\end{equation*}
This implies that for any $n \in \mathbb{N}$
\begin{equation*}
c^{\frac{\delta^{(n-1)}-1}{\delta-1}}a_1^{\delta^{(n-1)}}  \leq a_n \leq 
C^{\frac{\eta^{(n-1)}-1}{\eta-1}} a_1^{\eta^{(n-1)}}.
\end{equation*}
Then we obtain
\begin{align*}
\frac{\delta^{n-1}-1}{\delta-1} \log c + \delta^{(n-1)} \log a_1 
\leq \log a_n \leq \frac{\eta^{(n-1)}-1}{\eta-1} \log C + \eta^{(n-1)} \log a_1.
\end{align*}
Taking $a_1$ large enough, we obtain for sufficiently large $n\in \mathbb{N}$
\begin{equation*}
\log \log a_n \simeq n \simeq h_2(a_n),
\end{equation*}
which concludes the lemma.
\end{proof}

Now let us consider the estimate of the heat kernel on $M=M_2 \# M_3$. By the definition of $V_2(r)$, $V_3(r)$ 
and by Lemma \ref{lemma case 2}, we obtain for sufficiently large $r>1$
\begin{align*}
V_2(r)&\simeq \left\{ 
\begin{array}{ll} 
r^2 & a_k \leq r < b_k, \\
r^2 \log r & c_k \leq r <d_k ,
\end{array}
\right. &&\!\!\!\! V_3(r)\simeq r^2 \log r, \\
h_2(r) &\simeq \log \log r, &&\!\!\!\! h_3(r) \simeq \log \log r , \\
V_2(r)h_2^2(r) &\simeq 
\left\{ 
\begin{array}{ll} 
r^2(\log \log r)^2 & a_k \leq r < b_k, \\
r^2 \log r (\log \log r)^2 & c_k \leq r <d_k .
\end{array}
\right. 
&&\!\!\!\! V_3(r)h_3^2 (r) \simeq r^2 \log r (\log \log r)^2.
\end{align*}
Substituting above into  (\ref{min min}), we obtain for sufficiently large $k \in \mathbb{N}$ and  $a_k \leq \sqrt{t} < b_k$
\begin{equation*}
p(t, o, o) \leq C \frac{ (\log \log \sqrt{t})^2}{t (\log \log \sqrt{t} )^2} \simeq \frac{1}{t}.
\end{equation*}
Since $V(r) \simeq \max_i V_i(r) \simeq  r^2 \log r$ for all $r>1$,  the above upper estimate is much larger than
\begin{equation*}
\frac{1}{V(\sqrt{t})} \simeq \frac{1}{t \log t},
\end{equation*}
which makes difficult to obtain matching lower bound by using \cite[Theorem 7.2]{Coulhon-Grigoryan}.

We hope to prove matching heat kernel lower bounds in forthcoming work. 
%%%%%%%%%%%%%%%%%%%%%%%%%%%%%%%%%%%

\section*{Acknowledgments}
The second author would like to thank Professor Gilles Carron for telling him how to construct manifolds with 
oscillating volume function with (PHI).

\end{document}